\def\np{\bigskip\noindent}
\def\nl{\smallskip\noindent}
\def\cl#1{{#1}^{\rm cl}}
\def\op{{\rm op}}
\def\AO{{\mathcal  A}}
\def\A{{\rm A}}
\def\det{{\rm det}}
\def\Br{{\rm {\bf Br}}}
\def\BrM{{\rm {\bf BrM}}}
\def\ADE{{\rm ADE}}
\def\D{{\rm D}}
\def\E{{\rm E}}
\def\Sym{{\rm \Sigma}}
\newtheorem{Thm}{\bf{Theorem}}[section]
\newtheorem{Def}[Thm]{\bf{Definition}}
\newtheorem{Lm}[Thm]{\bf{Lemma}}
\newtheorem{lemma}[Thm]{Lemma}
\newtheorem{Prop}[Thm]{\bf{Proposition}}
\newtheorem{Cor}[Thm]{\bf{Corollary}}
\newtheorem{remark}[Thm]{\bf{Remark}}
\renewcommand{\phi}{\varphi}
\def\Z{{\mathbb Z}}
\def\Q{{\mathbb Q}}
\def\R{{\mathbb R}}
\def\B{{\mathcal B}}
\newcommand{\N}{{\mathbb{N}}}
\newcommand{\eps}{\varepsilon}
\def\a{\alpha}
\def\b{\beta}
\def\c{\gamma}
\def\e{\epsilon}
\def\la{\langle}
\def\ra{\rangle}
\author{Arjeh M. Cohen, Bart Frenk, David B. Wales}
\address{Arjeh M. Cohen\\
Department of Mathematics and Computer Science\\
Technische Universiteit Eindhoven\\
POBox 513\\
5600 MB Eindhoven\\
The Netherlands}
\email{A.M.Cohen@tue.nl}
\address{Bart Frenk\\
}
\email{bart\_frenk@yahoo.com}
\address{David B. Wales\\
Mathematics Department\\
Sloan Lab\\
Caltech\\
Pasadena, CA 91125\\
USA}
\email{dbw@its.caltech.edu}
\title{Brauer Algebras of Simply Laced Type}
\date{\today}
\begin{document}

\begin{abstract}
The diagram algebra introduced by Brauer that describes the centralizer
algebra of the $n$-fold tensor product of the natural representation of an
orthogonal Lie group has a presentation by generators and relations that only
depends on the path graph $\A_{n-1}$ on $n-1$ nodes.  Here we describe an
algebra depending on an arbitrary graph $M$, called the Brauer algebra of type
$M$, and study its structure in the cases where $M$ is a Coxeter graph of
simply laced spherical type (so its connected components are of type
$\A_{n-1}$, $\D_n$, $\E_6$, $\E_7$, $\E_8$).  We determine the representations
and find the dimension.  The algebra is semisimple and contains the group
algebra of the Coxeter group of type $M$ as a subalgebra.  It is a ring
homomorphic image of the Birman-Murakami-Wenzl algebra of type $M$; this fact
will be used in later work determining the structure of the
Birman-Murakami-Wenzl algebras of simply laced spherical type.

\bigskip\noindent
{\sc keywords:}
associative algebra, Brauer algebra,
Brauer diagram, Coxeter group, partially ordered set, root system

\bigskip\noindent
{\sc AMS 2000 Mathematics Subject Classification:}
20M05, 16K20, 17Bxx, 20Fxx, 20F36
\end{abstract}

\maketitle

\section{Introduction}\label{sec:intro}
Let $M$ be a graph. We define the Brauer monoid $\BrM(M)$ to be the monoid
generated by the symbols $r_i$ and $e_i$ for $i$ a node of $M$ and $\delta$,
$\delta^{-1}$ subject to the relations of Table \ref{table:BrauerRels}, where
$\sim$ denotes adjacency between nodes of $M$.  The Brauer algebra $\Br(M)$ of
type $M$ is the monoid algebra $\Z[\BrM(M)]$.  As $\delta$ is in the center of
$\BrM(M)$, it is also an algebra over $\Z[\delta^{\pm1}]$ and will often be
regarded as such.  For $M = \A_{n-1}$, this algebra was introduced not by
generators and relations but in terms of diagrams by Brauer \cite{Bra}.  It
was related to the centralizer algebra of the $n$-th tensor power of the
natural representation of a classical group where $\delta $ is the dimension
of the representation.  The BMW algebras, introduced by Birman \& Wenzl
\cite{BirWen} and Murakami \cite{Mur}, are deformations which play a similar
role for quantum groups and are also a useful tool for introducing Kauffman
polynomials, known from knot theory.  In \cite{CGW}, we introduced BMW
algebras of type $M$ for arbitrary $M$. The results of the present paper will
be of use in our determination of the structure of BMW algebras of type $\D_n$
\cite{CGWTangle,CGWBMW} in much the same way the Brauer algebra of type $\A_n$
was of use for Morton \& Wasserman \cite{MorWas} in the structure
determination of the BMW algebra of type $\A_n$.

\begin{center}
\begin{table}[ht]
\begin{tabular}{|lcl|lcl|}
\hline
label&\quad&relation&label&\quad&relation\\
\hline
($\delta$)&\quad&$\delta$ is central&
($\delta^{-1}$)&\quad&$\delta\delta^{-1} = 1$\\
\hline
&&{for $i$}&&&{for $i$}\\
(RSrr)&\quad&$r_i^2=1$&
(RSer)&\quad&$e_ir_i=e_i$\\
(RSre)&\quad&$r_ie_i=e_i$&
(HSee)\label{e-sq-eq-br}&\quad&$e_i^2=\delta e_i$\\
\hline
&&{for $i\not\sim j$}&&&{for $i\not\sim j$}\\
(HCrr)&\quad&$r_ir_j=r_jr_i$&
(HCer)&\quad&$e_ir_j= r_je_i$\\
(HCee)&\quad&$e_ie_j= e_je_i$&&&\\
\hline
&&{for $i\sim j$}&&&{for $i\sim j$}\\
(HNrrr)&\quad&$r_ir_jr_i=r_jr_ir_j$&
(HNrer)&\quad&$r_je_ir_j=r_ie_jr_i$\\
(RNrre)&\quad&$r_jr_ie_j=e_ie_j$&&&\\
\hline
\end{tabular}
\caption{\label{table:BrauerRels}Brauer relations}
\end{table}
\end{center}

\noindent
A look at (RSrr), (HCrr), and (HNrrr) makes it clear that products of the
$r_i$ belong to a subgroup of the monoid $\BrM(M)$ isomorphic to a quotient of
$W(M)$, the Coxeter group of type $M$. Modding out the ideal generated by all
$e_i$, we see that the subgroup itself is in fact isomorphic to $W(M)$ and
that the $r_i$ form a set of simple reflections.  In particular, the rank
of $\Br(M)$ as a module over $\Z[\delta^{\pm1}]$ is infinite if $M$ is not
spherical in the sense of \cite{Bou}.  This means that, if $\Br(M)$ is
finite-dimensional, its connected components are isomorphic to Coxeter graphs
of type $\A_n$ $(n\ge1)$, $\D_n$ $(n\ge4)$, or $\E_n$ $(n=6,7,8)$, which we
abbreviate to $\ADE$.  Also, if $M$ is disconnected, then $\Br(M)$ is a direct
sum of its connected components.  This explains why we are interested in the
cases where $M\in\ADE$.

\begin{Thm}\label{th:main}
The Brauer algebra of type $M\in \ADE$ over $\Z[\delta^{\pm1}]$ 
is free of dimension as given in Table \ref{table:dims}.
When tensored with $\Q(\delta)$, the algebra is semisimple.
\end{Thm}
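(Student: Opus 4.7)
The plan is to prove Theorem \ref{th:main} in three stages: establish an upper bound on the rank of $\Br(M)$ via a normal form, match it with a lower bound coming from an explicit faithful module, and then deduce semisimplicity over $\Q(\delta)$ from a trace argument built on the same decomposition.

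For the upper bound, first observe that (RSrr), (HCrr), and (HNrrr) show that the subsemigroup generated by the $r_i$ is a quotient of $W(M)$, while (HNrer) $r_je_ir_j=r_ie_jr_i$ forces the collection of $e$-factors in any monoid word to behave like a set of mutually orthogonal positive roots closed under $W(M)$-conjugation of simple roots; call such a set \emph{admissible}. Using (RNrre), (HCer), (HCee), (RSer), (RSre), and (HSee), I would rewrite any element of $\BrM(M)$ in the form $\delta^{k}\, u\, e_Y\, v$ with $Y$ admissible and $u,v$ distinguished representatives of left, respectively right, cosets of the pointwise stabilizer of $Y$ in $W(M)$. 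A length-reduction argument ordered by the Coxeter length of $u$ and $v$ delivers this normal form, and a count of the triples matches the entries in Table \ref{table:dims}.

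The main obstacle is linear independence. For this I would build a faithful module by letting the triples $(u,Y,v)$ above span a free $\Z[\delta^{\pm1}]$-module, with $r_i$ acting via the Coxeter action on the flanking cosets and $e_i$ acting by ``inserting'' the simple root $\alpha_i$ into $Y$, producing a factor of $\delta$ whenever $\alpha_i$ already lies in $Y$ (or more generally when the insertion creates a reducible configuration, collapsed via (HSee)). Showing that every relation of Table \ref{table:BrauerRels} is respected by this action is the technical heart of the proof and requires a careful case analysis organized by the local $\ADE$-geometry of pairs of simple roots meeting the admissible set, since (HNrer) and (RNrre) couple the Coxeter-group action to the admissible set in a non-trivial way. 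Once the action is verified, the module has rank equal to the number of normal forms, so the spanning set is a $\Z[\delta^{\pm1}]$-basis and $\Br(M)$ is free of the stated rank.

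For semisimplicity over $\Q(\delta)$, I would define a trace on the normal form basis by $\tr(\delta^{k}\, u\, e_Y\, v)=\delta^{k+|Y|}$ when $uv^{-1}$ lies in the pointwise stabilizer of $Y$ and $0$ otherwise, and verify cyclicity on generators by induction on $|Y|$ using the normal form. The associated bilinear form $(x,y)\mapsto \tr(xy)$ has a block structure indexed by $W(M)$-orbits of admissible sets; its Gram determinant is a polynomial in $\delta$ that is nonzero, as one sees by specialization to large integer $\delta$ where $\Br(M)$ maps into the classical Brauer algebra and the form is manifestly nondegenerate. Nondegeneracy excludes nilpotent ideals in $\Br(M)\otimes_{\Z[\delta^{\pm1}]}\Q(\delta)$ and yields the Wedderburn decomposition, completing the proof.
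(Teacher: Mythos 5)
Your overall architecture (a normal form for the upper bound, explicit modules for the lower bound, matching the two) is the right one and is in fact the paper's, but two of your key steps have concrete gaps. First, the normal form $\delta^k\,u\,e_Y\,v$ with $u,v$ running over cosets of the \emph{pointwise} stabilizer of $Y$ does not produce the count in Table \ref{table:dims}. The subgroup of $W$ that absorbs into $e_Y$ is not $C_W(Y)$ but the annihilator $A_Y=\{w\in W\mid we_Y=e_Y\}$, which on the one hand contains elements such as $r_\a r_\b r_\c r_\a$ (for a root $\a$ joined to two orthogonal $\b,\c\in Y$) that permute $Y$ nontrivially, and on the other hand does not contain all of $W(Y^\perp\cap\Phi)$; identifying it is itself a case-by-case computation (Proposition \ref{prop:AX=Kernel}). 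The correct normal form is $ue_Xzv\delta^k$ with $u,v^{-1}$ coset representatives of the \emph{setwise} stabilizer $N_W(X)$ and an additional middle factor $z\in W(C_\B)$ recording the through-strands; this middle factor is exactly what produces $|W(C_\B)|$ in $\sum_\B|\B|^2|W(C_\B)|$. Already for $M=\A_3$ your count gives $24+6^2+6^2=96$, whereas the correct total is $24+6^2\cdot 2+3^2=105$: for $|Y|=1$ you lose the factor $|W(\A_1)|=2$, and for $|Y|=2$ you overcount by $|N_W(Y)/C_W(Y)|^2=4$ because the element swapping the two roots of $Y$ already fixes $e_Y$. Relatedly, ``admissible'' is not closure under conjugation of simple roots: it is the condition that whenever three mutually orthogonal members of $Y$ are all joined to some root $\a$, the fourth orthogonal root $\pm r_\a r_{\b_1}r_{\b_2}r_{\b_3}\a$ also lies in $Y$ (Lemma \ref{lemma:unique_root}); for a non-closed $Y$ one has $e_{\cl{Y}}=e_Y\delta^{|\cl{Y}|-|Y|}$, so indexing idempotent types by arbitrary sets of orthogonal roots double-counts, and only the closed sets classified in Table \ref{table:types2} may appear in the normal form.

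Second, the semisimplicity step is unsupported. Your nondegeneracy argument rests on specializing $\delta$ to a large integer and mapping $\Br(M)$ into ``the classical Brauer algebra,'' but no such map exists (or is constructed) for $M=\D_n$ or $\E_n$; the classical diagram algebra is the type $\A$ case only, and producing a faithful concrete model for the other types is essentially as hard as the theorem itself. The conditional trace and its cyclicity would likewise need proof. The paper needs none of this: for each orbit $\B$ and each irreducible $\tau$ of $W(C_\B)$ it exhibits an irreducible $\Br(M)\otimes\Q(\delta)$-module of dimension $|\B|\tau(1)$, with irreducibility resting on a triangularity argument in $\delta$ --- the matrix describing the action of the $e_Y$ on the basis vectors $\xi_B$ has $\delta^{|B|}$ on the diagonal and strictly smaller powers of $\delta$ off it, hence nonzero determinant over $\Q(\delta)$. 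These representations are pairwise inequivalent, and $\sum_{\B,\tau}(|\B|\tau(1))^2$ equals the upper bound, so the surjection onto the corresponding product of matrix algebras is forced to be an isomorphism; freeness and split semisimplicity come out together. If you repair the normal form, your ``regular module on triples'' is essentially the direct sum of these modules with multiplicities, and you would still need this determinant argument (or evaluation at a distinguished vector) to conclude that the normal forms act independently.
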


\begin{table}[ht]
\begin{center}
\begin{tabular}{|l|r|}
\hline
$\ \ M\ \ $ & $\dim(\Br(M))$ \\
\hline
$\A_n$ & $(n+1)!!$ \\
$\D_n$ & $\ \ (2^n + 1)n!! - (2^{n-1} + 1)n!$ \\
$\E_6$ & $1,440,585$ \\
$\E_7$ & $139,613,625$ \\
$\E_8$ & $53,328,069,225$\\
\hline
\end{tabular}
\end{center}
\caption{\label{table:dims}\textrm{Brauer algebra dimensions}}
\end{table}

\noindent
Here $k!! = 1\cdot 3 \cdots (2k - 1)$, the product of the first $k$ odd
natural numbers.  As the submonoid $\la\delta^{\pm1}\ra$ of $\BrM(M)$
generated by $\delta$ and its inverse is a central subgroup of $\BrM(M)$, the
dimension given by the theorem is equal to the cardinality of the quotient
monoid $\BrM(M)/\la\delta^{\pm1}\ra$.

These assertions, with the precise dimensions for the series $\A_n$ and
$\D_n$, were conjectured before, cf.\ \cite{CGW}.  Treating the algebra by
similar generators and relations for $\A_n$ was done by Birman
 and Wenzl in
\cite{BirWen}.  The Brauer diagram algebra for $\A_n$ has the
stated dimension by \cite{Bra}.  A similar approach for $M = \D_n$ ($n\ge4)$
using diagrams appears in \cite{CGWTangle}.

In this paper, independent
arguments are given which use rewrites of monomials to a certain standard form
for upper bounding the dimensions and constructions of irreducible
representations for lower bounding the dimensions.

We analyze the structure of $\Br(M)$ in great detail.  In order to describe
the results, we recall some notions from \cite{CGW2}. Our standard reference
for Coxeter groups and root systems is \cite{Bou}.  Corresponding to each root
$\a$ (always normalized so that $(\a,\a) = 2$), there is a unique reflection
$r_\a\in W$, and, conversely, each reflection $r$ in $W$ has a unique positive
root $\b$ such that $r = r_\b$.  A set of mutually orthogonal positive roots
corresponds bijectively to a set of commuting reflections in $W$.  The group
$W$ acts on the sets of mutually orthogonal positive roots in a unique way
corresponding to conjugation on the sets of reflections. We consider
$W$-orbits under this action.  A set $B$ of mutually orthogonal positive roots
of $W(M)$ is called {\em admissible} if, whenever $\b_1,\b_2,\b_3$ are
distinct roots in $ B$ and there exists a root $\a$ for which $|(\a,\b_i)|=1$
for all $i$, the positive root of $\pm r_\a r_{\b_1}r_{\b_2}r_{\b_3}\a$ is
also in $B$, cf.\ Lemma \ref{lemma:unique_root} below.  In \cite{CGW2}, a
partial ordering was defined on the $W$-orbit $\B$ of an admissible set of
mutually orthogonal positive roots.  Each such $\B$ has a unique maximal
element $B_0$ in this ordering, called the {\em highest element}, see
\cite[Corollary 3.6]{CGW2}.  The set of nodes $i$ of $M$ with $\a_i$
orthogonal to each element of $B_0$ (notation $\a_i\perp B_0$) is denoted
$C_\B$.  A basis for the Brauer algebras of type $\ADE$ will be found that is
parametrized by triples consisting of an ordered pair of admissible sets of
mutually orthogonal positive roots from the same $W$-orbit $\B$ and an element
of $W(C_\B)$, see Proposition \ref{prop:monomials} and Corollary
\ref{cor:basis} below.  In this light, Theorem \ref{th:main} can be clarified
as follows.

\begin{lemma}\label{lm:counting}
The dimensions of Table \ref{table:dims} are equal to
$\sum_{\B}|\B|^2|W(C_\B)|$, where the summation is over all $W$-orbits $\B$ of
admissible sets of mutually orthogonal positive roots.  All orbits
$\B$ of nonempty admissible sets are listed in Table \ref{table:types2}.
\end{lemma}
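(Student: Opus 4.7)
The plan is a case-by-case evaluation: for each $M\in\ADE$, enumerate the $W(M)$-orbits of admissible sets of mutually orthogonal positive roots, compute $|\B|$ and $|W(C_\B)|$ for each orbit, and check that $\sum_\B|\B|^2|W(C_\B)|$ matches the entry of Table \ref{table:dims}.

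First, I would classify the orbits and in the process verify Table \ref{table:types2}. For $\A_n$ the admissibility condition is vacuous, since a root $e_x-e_y$ cannot satisfy $|(\a,\b_i)|=1$ against three mutually disjoint pairs $\b_i$ simultaneously; every set of mutually orthogonal positive roots is therefore admissible, and two such sets are $W$-conjugate iff they have the same cardinality $k$, so the orbits are indexed by $k=0,1,\dots,\lfloor(n+1)/2\rfloor$. For $\D_n$ the situation is more delicate because the branching node makes admissibility nontrivial, forcing closures that split some cardinality-$k$ orbits according to the presence or absence of $\D_4$-subsystem configurations. For $\E_6,\E_7,\E_8$ the orbit list is finite and can be enumerated directly, for instance by induction on $|B_0|$ starting from the highest root of $M$.

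Second, for each orbit $\B$ I would locate the highest element $B_0$ via \cite[Corollary 3.6]{CGW2}, compute $|\B|=|W(M)|/|N_{W(M)}(B_0)|$ by orbit-stabilizer, and identify $C_\B$ as the induced subgraph of $M$ on nodes $i$ with $\a_i\perp B_0$; the order $|W(C_\B)|$ is then the product of the Coxeter orders of its connected components. For $\A_n$, taking $B_0=\{e_i-e_{n+2-i}:1\le i\le k\}$ yields $|\B_k|=\binom{n+1}{2k}\,k!!$, $C_{\B_k}$ of type $\A_{n-2k}$ (understood as empty when $n\le 2k$), and $|W(C_{\B_k})|=(n-2k+1)!$. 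Analogous data must be tabulated for the remaining types, a mechanical but lengthy root-system computation.

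Third, I would sum and compare. For $\A_n$ this reduces to the combinatorial identity
\[
(n+1)!! \;=\; \sum_{k=0}^{\lfloor (n+1)/2\rfloor}\binom{n+1}{2k}^{2}(k!!)^{2}\,(n-2k+1)!,
\]
provable by induction on $n$, or combinatorially since both sides count Brauer diagrams on $n+1$ strands. The $\D_n$ analogue splits into the two orbit families from Step 1 and, after standard binomial manipulations, collapses to $(2^n+1)n!!-(2^{n-1}+1)n!$. For $\E_6,\E_7,\E_8$ the sum has finitely many terms and reduces to a direct numerical check against the three values in Table \ref{table:dims}. The main obstacle is the $\D_n$ case: both the orbit classification and the summation are considerably more intricate than for $\A_n$, since the $\D_4$-triality forces a careful treatment of admissibility, and the resulting two-family sum must be collapsed via nontrivial binomial identities; once Table \ref{table:types2} is fixed, however, the arithmetic in each case is mechanical.
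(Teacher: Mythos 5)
Your proposal is correct and follows essentially the same route as the paper: classify the $W$-orbits of admissible sets (the paper simply cites \cite{CGW2} for Table \ref{table:types2} rather than re-deriving it), compute $|\B|$ by orbit--stabilizer and read off $C_\B$, then verify the sum case by case, handling the $\A$-type identity by counting Brauer diagrams (or by the method of \cite{Wilf}) and reducing the $\D_n$ sum to it. The only substantive difference is that you take on the classification yourself, which is more work but not a different method.
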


\begin{table}[ht]
\begin{center}
\begin{tabular}{|c|cccc|}
\hline
$M$ & $|X|$ & $X^\perp\cap\Phi$ & $C_\B$ & $N_W(X)/C_W(X)$\\
\hline
$\A_n$ & $t$ & $\A_{n - 2t}$ & $\A_{n - 2t}$ & $\Sym_t$ \\
\hline
$\D_n$ & $t$ & $\A_1^t \D_{n - 2t}$ & $\A_1 \D_{n - 2t}$ & $\Sym_t$ \\
$\D_n$ & $2t$ & $\D_{n - 2t}$ & $\A_{n - 2t - 1}$ & $W({\rm B}_t)^*$ \\
\hline
$\E_6$ & $1$ & $\A_5$ & $\A_5$ & $\Sym_1$ \\
$\E_6$ & $2$ & $\A_3$ & $\A_2$ & $\Sym_2$ \\
$\E_6$ & $4$ & $\emptyset$ & $\emptyset$ & $\Sym_4$ \\
\hline
$\E_7$ & $1$ & $\D_6$ & $\D_6$ & $\Sym_1$ \\
$\E_7$ & $2$ & $\A_1 \D_4$ & $\A_1 \A_3$ & $\Sym_2$\\
$\E_7$ & $3$ & $\D_4$ & $\A_2$ & $\Sym_3$ \\
$\E_7$ & $4$ & $\A_1^3$ & $\A_1$ & $\Sym_4$\\
$\E_7$ & $7$ & $\emptyset$ & $\emptyset$ & ${\rm L}(3,2)$\\
\hline
$\E_8$ & $1$ & $\E_7$ & $\E_7$ & $\Sym_1$\\
$\E_8$ & $2$ & $\D_6$ & $\A_5$ & $\Sym_2$\\
$\E_8$ & $4$ & $\D_4$ & $\A_2$ & $\Sym_4$\\
$\E_8$ & $8$ & $\emptyset$ & $\emptyset$ & $2^3{\rm L}(3,2)$\\
\hline
\end{tabular}
\end{center}
\caption{\label{table:types2}\textrm{Nonempty admissible sets $X$ of mutually
orthogonal positive roots.  Each line corresponds to the $W$-orbit of a single
$X$ for each possible choice of $|X|$ indicated in the second column except
for the first line for $\D_{n}$ when $n=2t$, in which case there are two
$W$-orbits with $|X| = n/2$, conjugate by an outer automorphism.  The values
of $t$ lie in $\Z\cap[1,n/2]$.  The third column lists the Cartan type of the
root system on the roots orthogonal to $X$.  The centralizer $C_W(X)$ is the
semi-direct product of the elementary abelian group of order $2^{|X|}$
generated by the reflections in $W$ with roots in $X$ and the subgroup
$W(X^\perp\cap\Phi)$ of $W$ generated by reflections with roots in
$X^\perp\cap\Phi$.  The fourth column lists $C_\B$ for $\B=WX$, the $W$-orbit
of $X$, and the last column lists the structure of $N_W(X)/C_W(X)$.  Here
$W({\rm B}_t)^*$ is understood to be $W({\rm B}_t)$ if $t <n/2$ and $W(\D_t)$
if $t = n/2$.}}
\end{table}

\begin{proof}
See \cite{CGW2} for the second statement (in
[loc.~cit.] the type of $C_\B$ 
for $M=\E_7$ and $|B_0|=2$ is incorrect).

As for
the first statement,
the size of the $W$-orbit $\B$ of an element $X$ from the table
is equal to
$$\frac{|W(M)|}{|N_W(X)|} = \frac{|W(M)|}{ 2^{|X|}|W(X^\perp\cap\Phi)|\cdot
|N_W(X)/C_W(X)|}.$$ All factors occurring in the last expression can be
determined by means of the table and the knowledge of orders of Coxeter groups
of type $\ADE$.  The statement now follows from the following expressions of
the relevant numbers for the individual types.  
For $M=\A_{n-1}$ $(n\ge2)$, the summation
gives
$$\sum_{t=0}^{\lfloor n/2\rfloor} \left(\frac{n!}{2^{t}t!(n-2t)!}\right)^2
(n-2t)! $$ which adds up to $n!!$.  The equality between this summation and
the expression of Table \ref{table:dims} can be proved 
directly as in \cite[p.~113]{Wilf}, or by counting
Brauer diagrams in two different ways, as is clear from \cite{Bra}.
For
$M=\D_n$ $(n\ge4)$, the summation is
$$2^{n-1}n!+ \sum_{t=1}^{\lfloor n/2\rfloor}
\left(\frac{n!}{t!(n-2t)!}\right)^2 2^{n-2t}(n-2t)!  + \sum_{t=1}^{\lfloor
n/2\rfloor} \left(\frac{n!}{2^{t}t!(n-2t)!}\right)^2 (n-2t)! $$ which, by the
formula for $\A_{n-1}$, is easily seen to be $2^{n-1}n! + 2^n( n!!-n!) +
(n!!-n!)$; hence it coincides with the expression for $\D_n$ in Table
\ref{table:dims}.  Here the expression in the first sum over $t$
for $t={n}/{2}$ is in fact a sum over the two orbits.  It is
$2\left(\frac{n!}{2({n}/{2})!}\right)^22$, where
the leftmost $2$ occurs because there are two orbits and the rightmost $2$
accounts for the $\A_1$ component in $C_\B$.
Therefore, the summand becomes
$\left(\frac{n!}{(n/2)!}\right)^2$, and so the expression
$\left(\frac{n!}{t!(n-2t)!}\right)^2 2^{n-2t} (n-2t)!$ is valid for
all $t\le n/2$.  In the second sum, the summand for $t = n/2$ also gives
the expected answer by a deviation from the usual pattern:
the group $C_W(X)$ has order $2^{n-1}(n-2t)!$ and
$N_W(X)/C_W(X)$ has order $2^tt!$ for $t<n/2$, but the respective orders
are $2^n(n-2t)!$ and $2^{t-1}t!$ if $n=2t$
(as the type of the latter is ${\rm D}_{n/2}$ rather than ${\rm B}_{n/2}$), so
$|N_W(X)| = 2^{n+t-1}(n-2t)!t!$ in all cases.
An interpretation in terms of numbers of certain diagrams of type
$\D_n$ will be given in \cite{CGWTangle}.  For $M=\E_6$, the summation is
$|W(\E_6)| + 36^2 |W(\A_5)| + 270^2 |W(\A_2)| + 135^2$,
for $M=\E_7$,
$|W(\E_7)| + 63^2 |W(\D_6)| + 945^2 |W(\A_1\A_3)| + 315^2|W(\A_2)| +
  945^2|W(\A_1)| + 135^2 $,
and for $M=\E_8$,
$|W(\E_8)| + 120^2 |W(\E_7)| + 3780^2 |W(\A_5)| + 9450^2|W(\A_2)| + 2025^2 $.
\end{proof}

After some preliminaries on admissibility in Section \ref{sec:prel} and the
construction of a presentation as maps and a linear representation of the
monoid $\BrM(M)$ in Section \ref{sec:rep}, we prove the upper bound of
$\dim(\Br(M))$ in Section \ref{sec:upb} and the lower bound in Section
\ref{sec:lb}.  At the very end we describe how the Brauer diagrams for type
$\A_{n-1}$ can be extended to a `geometric' picture involving roots for other
types in $\ADE$.  These are in terms of the triples described above Lemma
\ref{lm:counting}.  For those familiar with the Brauer diagrams the triples
may be interpreted as knowledge of the horizontal lines on the top, the
horizontal lines on the bottom, and the permutation of the remaining lines,
see Remarks \ref{rmk:triple1} and \ref{rmk:triple2} below.

Some of the work reported here grew out of the Master's thesis of one of us,
\cite{Frenk}. The other two authors wish to acknowledge Caltech and Technische
Universiteit Eindhoven for enabling mutual visits.

\section{Admissibility}\label{sec:prel}
In this section we mention some basic properties of Brauer algebras related to
admissible sets of mutually orthogonal positive roots that are useful for the
proof of the main theorem.

There exists a notion of root system $\Phi$, \cite{Deo82}, which specializes
to the usual root system for $M\in\ADE$, see \cite{Bou}, and similarly for the
set $\Phi^+$ of positive roots in $\Phi$.  For $\a,\b\in\Phi$, we write
$\a\sim\b$ to denote $|(\a,\b) | = 1$. Thus, for $i$ and $j$ nodes of $M$, we
have $\a_i\sim\a_j$ if and only if $i\sim j$.

\begin{lemma}\label{lemma:unique_root}
Let $M\in\ADE$ and let $\beta_1$, $\beta_2$, $\beta_3$ be three mutually
orthogonal roots of $W(M)$. Then, up to sign, there is at most one $\langle
r_{\b_1},r_{\b_2},r_{\b_3}\rangle$-orbit of roots $\gamma$ with $\gamma\sim
\beta_i$ for $i=1,2,3$. Moreover, there is a unique fourth positive root
$\b_4$ orthogonal to $\b_1$, $\b_2$, $\b_3$ such that, for each such $\c$, we
have $\b_4\sim\gamma$. This root satisfies $\b_4 = \pm r_{\c} r_{\b_1}
r_{\b_2} r_{\b_3} \c$.
\end{lemma}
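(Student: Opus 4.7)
The plan is to reduce everything to an orthogonal decomposition relative to the span of $\beta_1,\beta_2,\beta_3$, exploiting that every root in an $\ADE$ system has squared norm $2$. The group $\la r_{\beta_1},r_{\beta_2},r_{\beta_3}\ra$ acts on any $\gamma$ with $\gamma\sim\beta_i$ by independently negating the three inner products $(\gamma,\beta_i)$, so up to this action we may assume $(\gamma,\beta_i)=1$ for each $i$. Write $\gamma=\tfrac12(\beta_1+\beta_2+\beta_3)+\gamma^\perp$ with $\gamma^\perp$ orthogonal to all $\beta_i$; then $(\gamma,\gamma)=2$ forces $(\gamma^\perp,\gamma^\perp)=1/2$.

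For the uniqueness claim, let $\gamma'$ be a second such root with $(\gamma',\beta_i)=1$. Then $(\gamma,\gamma')=3/2+(\gamma^\perp,(\gamma')^\perp)$ is an integer, so $(\gamma^\perp,(\gamma')^\perp)\in\{-1/2,1/2\}$, and in either case equality in Cauchy--Schwarz (both vectors having squared norm $1/2$) gives $\gamma^\perp=\pm(\gamma')^\perp$. If the sign is $+$ then $\gamma=\gamma'$. If it is $-$ then $\gamma+\gamma'=\beta_1+\beta_2+\beta_3$, so $r_{\beta_1}r_{\beta_2}r_{\beta_3}\gamma=\gamma-(\beta_1+\beta_2+\beta_3)=-\gamma'$, placing $\gamma'$ in the orbit of $\gamma$ up to sign.

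For existence of $\beta_4$, a direct computation using orthogonality of the $\beta_i$'s and $(\gamma,\beta_i)=1$ yields
$$r_\gamma r_{\beta_1}r_{\beta_2}r_{\beta_3}\gamma \;=\; 2\gamma-(\beta_1+\beta_2+\beta_3) \;=\; 2\gamma^\perp.$$
This vector is automatically a root, being a Weyl group image of $\gamma$. Take $\beta_4$ to be the positive one among $\pm2\gamma^\perp$. Then $\beta_4$ is orthogonal to each $\beta_i$ by construction, and $(\beta_4,\gamma)=2(\gamma^\perp,\gamma^\perp)=1$, so $\beta_4\sim\gamma$. Independence of $\beta_4$ from the choice of $\gamma$ is built in: the $\la r_{\beta_i}\ra$-action on $\gamma$ preserves $\gamma^\perp$, and the uniqueness argument shows any other eligible $\gamma'$ yields $(\gamma')^\perp=\pm\gamma^\perp$, hence the same positive root $\beta_4$.

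The main subtlety I expect is the $-$ case of the uniqueness step: one must recognize that $\gamma^\perp=-(\gamma')^\perp$ does not signal a new orbit but is precisely the twist realized by $r_{\beta_1}r_{\beta_2}r_{\beta_3}$ followed by negation. The remainder is essentially mechanical once the orthogonal decomposition and the value $(\gamma^\perp,\gamma^\perp)=1/2$ are in hand; in particular, $\beta_4$'s rootness is free because we exhibit it as the image of $\gamma$ under an element of $W$ rather than merely as a vector of squared norm $2$.
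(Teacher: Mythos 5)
Your proof is correct. It follows the same skeleton as the paper's — normalize the signs of the three inner products $(\gamma,\beta_i)$ and compute $r_\gamma r_{\beta_1}r_{\beta_2}r_{\beta_3}\gamma = 2\gamma \mp (\beta_1+\beta_2+\beta_3)$ — but the bookkeeping is organized differently. The paper flips the signs of the $\beta_i$ themselves, introduces the auxiliary root $\eps=\gamma+\beta_1+\beta_2+\beta_3$, and derives $(\gamma',\eps)\le -2$, forcing $\gamma'=-\eps$ because two roots of norm $2$ with inner product $-2$ are negatives of each other; you instead normalize $\gamma$ within its $\la r_{\beta_1},r_{\beta_2},r_{\beta_3}\ra$-orbit (which is more faithful to the statement being proved) and work with the orthogonal decomposition $\gamma=\tfrac12(\beta_1+\beta_2+\beta_3)+\gamma^\perp$, $(\gamma^\perp,\gamma^\perp)=\tfrac12$, getting the same dichotomy from integrality plus Cauchy--Schwarz. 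The two arguments are equivalent in substance, but your identification $\beta_4=2\gamma^\perp$ buys something concrete: the independence of $\beta_4$ from the choice of $\gamma$, and even the full uniqueness of a positive root orthogonal to $\beta_1,\beta_2,\beta_3$ and adjacent to $\gamma$ (any such $\delta$ satisfies $|(\delta,\gamma^\perp)|\le\sqrt{2\cdot\tfrac12}=1$ with equality only for $\delta=\pm2\gamma^\perp$), become immediate, whereas the paper argues well-definedness separately via the $4$-dimensional span. One small presentational point: you should say explicitly (as you implicitly use) that Cartan integrality gives $(\gamma,\gamma')\in\Z$ for roots in a simply laced system normalized to norm $2$.
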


\begin{proof}
Suppose that $\gamma$ and $\gamma'$ are roots 
with  $\gamma\sim \beta_i\sim \gamma'$ for $i=1,2,3$.
After replacing each $\b_i$ by its negative if needed,
we may assume $(\gamma,\b_i) = -1$ for $i=1,2,3$.
Now
$\b_4 =  r_{\c} r_{\b_1} r_{\b_2} r_{\b_3} \c=
2\c+\b_1+\b_2+\b_3\in\Phi$ is a root orthogonal to $\b_1$, $\b_2$, $\b_3$
with $(\b_4,\c) = 1$.
Also, $\eps= r_\c\b_4 = \c+\b_1+\b_2+\b_3$ is a root.  
Replacing $\gamma'$ by $r_{\b_i}\gamma'$ if
needed for successive values of $i$,
we can arrange for $(\gamma',\beta_i) = -1$ if $i\in \{1,2,3\}$.
If $\gamma'$ does not coincide with $\gamma$,
then $(\gamma',\c)\leq 1$, so
\begin{eqnarray*}
(\gamma',\eps)
& = & (\gamma' , \gamma + \beta_1 + \beta_2 + \beta_3) 
 =  (\gamma' ,\c) - 3\le -2. 
\end{eqnarray*}
The only possibility of this being an integer with norm at most $2$ occurs
when $(\c',\eps) = -2$, that is,
$\gamma' =-\eps = -r_{\b_1} r_{\b_2} r_{\b_3} \c$,
which, up to sign, belongs to 
the $\langle
r_{\b_1},r_{\b_2},r_{\b_3}\rangle$-orbit of $\gamma$.

As for uniqueness of $\b_4$, observe that the linear span of
$\b_1$, $\b_2$, $\b_3$, and $\c$ does not depend on the choice of $\c$
and contains $\b_4$. But in that 4-dimensional space,
$\b_4$ or $-\b_4$ is the unique positive root orthogonal to 
$\b_1$, $\b_2$, and $\b_3$.
\end{proof}

Let $X$ be a set of mutually orthogonal positive roots.  Then, by the lemma,
for each triple of elements in $X$ for which there exists a root $\c$
non-orthogonal to each of the triple, there is a unique element of $\Phi^+$
distinct from $\c$, non-orthogonal to $\c$, and orthogonal to each root from
the triple. Therefore, the intersection of any collection of admissible sets
of mutually orthogonal positive roots is again admissible.  Consequently, the
following notion is well defined as the intersection of all admissible sets
containing $X$.

\begin{Def}\label{admissibleclosure}  
Given a set $X$ of mutually orthogonal positive roots, the unique smallest
admissible set containing $X$ is called the {\em admissible closure} of $X$,
and denoted $\cl{X}$.
\end{Def}

In view of Lemma \ref{lemma:unique_root}, the closure of $X$ can be
constructed by iteratively finding all $\b_1,\b_2,\b_3\in X$ for which there
is a root $\c$ with $\b_i\sim\c$ for all $i$, and 
adjoining the positive root of $\pm r_\c
r_{\b_1}r_{\b_2}r_{\b_3}\c$ to $X$.

\section{Representations of the Brauer Monoid}
\label{sec:rep}

Throughout this section, we assume that $M$ is of type ADE.  Set $W=W(M)$. As
mentioned before the statement of Theorem \ref{th:main}, $W$ occurs as a
subgroup of $\BrM(M)$.  The elements $r_i$ of $\BrM(M)$, for $i$ nodes of $M$,
are a set of simple reflections of $W$, cf.~\cite{Bou}.  It will be convenient
to have more relations for $\Br(M)$ at our disposal than those given in
Table~\ref{table:BrauerRels}.

\begin{table}[ht]
\begin{center}
\begin{tabular}{|lcl|}
\hline
label&\quad&relation\\
\hline
&&for $i\sim j$\\
(RNerr)&\quad&$e_ir_jr_i=e_ie_j$\\
(HNree)&\quad&$r_je_ie_j=r_ie_j$\\
(RNere)&\quad&$e_ir_je_i=e_i$\\
(HNeer)&\quad&$e_je_ir_j=e_jr_i$\\
(HNeee)&\quad&$e_ie_je_i=e_i$\\
\hline
&&for $i\sim j\sim k$\\
(HTeere)&\quad&$e_je_ir_ke_j=e_jr_ie_ke_j$\\
(RTerre)&\quad&$e_jr_ir_ke_j=e_je_ie_ke_j$\\
\hline
\end{tabular}
\caption{\label{AddBrauerTable}Additional relations}
\end{center}
\end{table}

\begin{Lm}\label{additionalrel}  
The relations in Table \ref{AddBrauerTable} also hold in $\Br(M)$.
\end{Lm}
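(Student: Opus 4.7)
The plan is to derive the seven relations in Table \ref{AddBrauerTable} one at a time from those in Table \ref{table:BrauerRels}, using the earlier derivations as the chain progresses. Since $M$ is of type $\ADE$, the graph $M$ contains no triangle, so the hypothesis $i\sim j\sim k$ in the last two relations forces $i\not\sim k$; this gives the commutations (HCrr), (HCer), (HCee) between the $i$- and $k$-generators as an essential extra tool.

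First I would derive (RNerr) by multiplying (HNrer) on the left by $r_j$ and on the right by $r_i$, simplifying via $r_i^2=r_j^2=1$: this yields $e_i r_j r_i = r_j r_i e_j$, and the right side equals $e_i e_j$ by (RNrre). From (RNerr), swapping the roles of $i$ and $j$ (valid because $i\sim j$ is symmetric) and right-multiplying by $r_j$ immediately gives (HNeer); left-multiplying (RNerr) by $r_j$ and then invoking (HNrer) on the resulting $r_j e_i r_j$ factor gives (HNree). For (RNere), I would compute $e_i r_j e_i$: use (HNrer) to rewrite $r_j e_i$ as $r_i e_j r_i r_j$, collapse $e_i r_i = e_i$ via (RSer), and apply (RNerr) in the form $e_i e_j = e_i r_j r_i$ to telescope down to $e_i$. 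Finally, (HNeee) follows from the chain $e_i e_j e_i = e_i r_j r_i e_i = e_i r_j e_i = e_i$, using (RNerr), (RSre), and (RNere) in succession.

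For the length-four relations, I would exploit the $i\not\sim k$ commutations to separate the $i$- and $k$-generators, then reduce both sides to a common expression. For (HTeere), the commutation $e_i r_k = r_k e_i$ rewrites the left side, and using (RNrre) in the form $e_i e_j = r_j r_i e_j$ yields $e_j r_k r_j r_i e_j$; the dual commutation $r_i e_k = e_k r_i$ together with the identity $e_j e_k = e_j r_k r_j$ obtained from (RNerr) reduces the right side to the same expression. For (RTerre), I would start from $e_j e_i e_k e_j$, rewrite $e_k e_j$ by (HNree) as $r_j r_k e_j$, substitute $e_i r_j = e_i e_j r_i$ from (RNerr), and apply the collapse (HNeee) in the form $e_j e_i e_j = e_j$ to obtain $e_j r_i r_k e_j$. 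The main obstacle is bookkeeping rather than any conceptual difficulty: the substitutions must be chosen so that each intermediate product of four generators factors across an already-established three-letter relation, but no novel step arises.
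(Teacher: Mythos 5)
Your derivations are correct and follow essentially the same route as the paper's: each relation of Table \ref{AddBrauerTable} is obtained by a short rewrite from Table \ref{table:BrauerRels} and the previously established identities (the only reorderings are cosmetic, e.g.\ you prove (RNere) before (HNeee) while the paper does the reverse). The one substantive difference is in (HTeere), where you invoke $i\not\sim k$ (triangle-freeness of $M$) to commute the $i$- and $k$-generators; this is legitimate under the section's standing hypothesis $M\in\ADE$, but it is avoidable --- the paper gets $e_je_ir_ke_j=e_jr_ir_jr_ke_j=e_jr_ie_ke_j$ directly from (RNerr) and (RNrre), so its version of the lemma holds for arbitrary graphs $M$.
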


\begin{proof}
For (RNerr), we apply (HNrre), (RSrr), (HNrer), and (RSrr), respectively, to
obtain $e_ie_j=r_jr_ie_j=r_jr_ie_jr_ir_i=r_jr_je_ir_jr_i=e_ir_jr_i$.  For
(HNeee) multiply $e_ir_jr_i=e_ie_j$, from (RNerr), by $ r_ir_j$ and use (RSrr)
and (RNerr) respectively, to get $e_i=e_ie_jr_ir_j=e_ie_je_i$.  For (HNree),
use (RNrre) and (RSrr) to derive $r_je_ie_j =r_jr_jr_ie_j = r_ie_j$.  For
(RNere), use (RSrr), (RSer), (RNrre), and (HNeee) to compute $e_ir_je_i =
e_ir_ir_ir_je_i = e_ie_je_i = e_i$.  For (HNeer), use the reversed words of
(HNree) and notice (RNerr) holds.  For (HTeere), use (RNerr) and (RNrre) to
find $e_je_ir_ke_j = e_jr_ir_jr_ke_j = e_jr_ie_ke_j$.  Finally, for (RTerre),
use (RSrr), (RNerr), and (RNrre) to compute $e_jr_ir_ke_j = e_jr_ir_jr_jr_ke_j
= e_je_ie_ke_j $.
\end{proof}

By $\AO$ we
denote the collection of admissible sets of mutually orthogonal positive
roots.  Let $\B$ be a $W$-orbit in $\AO$. Denote $B_0$ its highest element with
respect to the partial order defined on $\B$; see \cite{CGW2}
for this partial order and the proof
of existence of $B_0$ . The set of nodes $i$ of $M$ for
which $\a_i\perp B_0$ plays an important role in \cite{CGW2}; here it will be
denoted $C_\B$ or, if no confusion is imminent, just $C$. It is well known,
\cite{Bou}, that the subgroup $W(C)$ of $W$ generated by the $r_i$ for $i\in
C$ is a Coxeter group whose type is the restriction of $M$ to $C$.

We present a useful representation of the Brauer monoid as a set of maps from
$\AO$ to itself. At the same time, for each $W$-orbit $\B$ within $\AO$, we
construct a linear representation of the Brauer algebra with basis indexed by
$\B$ and with coefficients from the group ring of $W(C_\B)$ over
$\Z[\delta^{\pm1}]$.  We begin with the action on $\AO$.

\begin{Def}\label{def:Baction} \rm  
Let $\AO$ be the disjoint union of all admissible $W$-orbits (so the empty set
is a member of $\AO$). The action of $W$ on $\AO$ is already given and
corresponds to conjugation on sets of reflections. The action of $\delta$ is
taken to be trivial.  This action extends to an action of the generators $e_i$
of the Brauer monoid in the following way, for $i\in M$ and $B\in \AO$:
\begin{equation}\label{def:sigma_e}
e_iB = \begin{cases}B&\mbox{if  } \a_i\in B,\\
\cl{(B\cup\{\a_i\})}&\mbox{if }\a_i\perp B,\\
r_{\b}r_i B &\mbox{if }\b\in B\setminus\a_i^\perp.
\end{cases}
\end{equation}
\end{Def}

\begin{lemma}\label{lm:admclosure}
For each admissible set $B$, set 
$X$ of mutually orthogonal positive roots
(not-necessarily admissibly closed),
node $i$ of $M$, and positive root $\c$, the
following properties hold.
\begin{enumerate}[(i)]
\item $\a_i\in e_iB$.
\item If $\c\perp X $, then $\c \perp \cl{X}$ or $\c\in \cl{X}$.
\item If $(\a_i,\c)=0$ and $\c\perp B $, then $\c \perp e_iB$ or $\c\in e_iB$.
\item If $w\in W$, then $w\cl{X}=\cl{(wX)}$.
\item The element $e_iB$ is well defined.
\end{enumerate}
\end{lemma}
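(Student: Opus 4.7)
The plan is to dispose of (i)--(iv) by short case analyses and an induction on the construction of $\cl{X}$ described after Definition \ref{admissibleclosure}, and then focus the real work on (v), where the nontrivial issue is that in case (c) of Definition \ref{def:Baction} the set $r_\b r_i B$ must be shown independent of the choice of $\b$.

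For (i), I would simply check the three cases of the definition: the first two are immediate from the formulas, and in case (c) the elementary identity $r_\b r_i\b = -(\a_i,\b)\a_i$ (valid in any root system with $|(\a_i,\b)| = 1$) shows that the positive root $\a_i$ lies in $r_\b r_i B$. For (ii), I would induct on the iterative construction of $\cl X$ and show that at each stage either $\c$ is orthogonal to the current set or $\c$ already belongs to it. When the closure adjoins $\b_4 = 2\gamma + \b_1 + \b_2 + \b_3$ for a triple $\b_1,\b_2,\b_3$ already present and a connecting root $\gamma$ normalized so $(\gamma,\b_i) = -1$, orthogonality of $\c$ to the current set forces $(\c,\b_4) = 2(\c,\gamma)$, which in ADE is an even integer of absolute value at most $2$, so either $\c\perp\b_4$ or $\c = \pm\b_4$. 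For (iv), I would argue by double inclusion: the admissibility condition in Lemma \ref{lemma:unique_root} is phrased in inner-product and $W$-equivariant terms, so $w\cl X$ is admissible; since it contains $wX$, minimality of closure gives $\cl{(wX)}\subseteq w\cl X$, and applying the same argument to $w^{-1}$ gives the reverse inclusion.

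For (iii), I would split on the cases of Definition \ref{def:Baction}. Case (a) is trivial since $e_iB = B$. Case (b) reduces to (ii) applied to $X = B\cup\{\a_i\}$, all of whose members are orthogonal to $\c$. In case (c) with pivot $\b$, both $r_i$ and $r_\b$ fix $\c$ (since $(\a_i,\c) = 0 = (\b,\c)$), so for every $\xi\in B$ the $W$-invariance of the bilinear form yields $(\c,r_\b r_i\xi) = (r_i r_\b\c,\xi) = (\c,\xi) = 0$, whence $\c\perp e_iB$.

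The main obstacle is (v), and within it the independence of $r_\b r_i B$ from the choice of $\b\in B\setminus\a_i^\perp$. Compatibility of the three displayed cases is easy: (a) and (b) are mutually exclusive, and the overlap of (a) with (c) forces $\b = \a_i$, in which case $r_\b r_i = 1$ and both cases give $B$. For the independence, given $\b,\b'\in B\setminus\a_i^\perp$, I plan to show that the isometry $\phi := r_i r_{\b'}r_\b r_i$ permutes $B$, since then $r_\b r_i B = r_{\b'}r_i(\phi B) = r_{\b'}r_i B$ as sets of positive roots. Direct expansion yields $\phi\gamma = \gamma$ whenever $\gamma\in B$ is orthogonal to $\a_i$, $\phi\b = \pm\b'$, $\phi\b' = \pm\b$, and for a third $\gamma\in B\setminus\{\b,\b'\}$ with $\eta := (\a_i,\gamma)\neq 0$, $\phi\gamma = -\eta\b_4$, where $\b_4 = 2\a_i - (\a_i,\b)\b - (\a_i,\b')\b' - \eta\gamma$. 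Comparing with the formula in Lemma \ref{lemma:unique_root} applied to the mutually orthogonal triple $\{\b,\b',\gamma\}\subseteq B$ with common non-orthogonal root $\a_i$, admissibility of $B$ places the positive root of $\b_4$ in $B$, so $\phi(B)\subseteq B$; since $\phi$ is an isometry, it is a bijection on the finite set $B$. Finally, admissibility of $e_iB$ follows from $W$-invariance of admissibility in cases (a) and (c), and from the definition of closure in case (b).
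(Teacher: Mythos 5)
Your proof is correct and follows essentially the same route as the paper's: (i)--(iv) by the same case analyses and the same parity argument $(\c,\zeta)=2(\c,\a)$ for closure, and (v) by reducing well-definedness to the admissibility of $B$ via the fourth root of Lemma \ref{lemma:unique_root}. The only cosmetic difference is in (v), where you package the comparison of $r_\b r_i$ and $r_{\b'}r_i$ as the statement that the single isometry $r_ir_{\b'}r_\b r_i$ stabilizes $B$ up to sign, while the paper checks directly that the two maps agree setwise on $B\cap\a_i^\perp$, on $\{\b,\b'\}$, and on the pair $\{\eta,\zeta\}$; both hinge on the identical admissibility fact.
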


\begin{proof}
(i) is direct from the definition of the action of $e_i$.
(Observe that $\a_i=\pm r_\b r_i\b$ if $\b\sim \a_i$.)

\nl(ii).  Suppose that $\c_1,\c_2,\c_3$ are roots in $X$ and $\a\in\Phi$ has
inner products $-1$ with each of these. The admissible closure of $X$ will
then contain the positive root $\zeta$ of $\pm(\c_1+\c_2+\c_3+2\a)$, see Lemma
\ref{lemma:unique_root}. If $\c$
is not orthogonal to $\zeta$, then, by the assumption $\c\perp X$, we must
have $0\ne (\c,\zeta) = 2(\c,\a)$. Therefore, $(\c,\a) = \pm 1$
and $(\c,\zeta) = \pm 2$, which means $\c=\pm\zeta$.
As both $\c$ and $\zeta$ are positive, we find $\c=\zeta\in \cl{X}$.

\nl(iii).  If $\a_i\in B$, then $e_iB = B$, and so the conclusion holds by the
hypothesis $\c\perp B$.  If there is $\b\in B\setminus\a_i^\perp$, then $e_iB
= r_\b r_i B$, which consists fully of roots orthogonal to $\c$.

Finally, suppose $\a_i\perp B$. Then $e_i B = \cl{(B\cup \{\a_i\} )}$ and so
the assertion follows from (ii).

\nl(iv). If $\a,\b,\c $ are mutually orthogonal roots joined to $\zeta$, the same is true
for the $w$ images.

\nl(v). 
Ambiguity arises if there are two choices, say $\b$ and $\c$, of roots in
$B\setminus\a_i^\perp$.  We need to show that then $r_\b r_i B = r_\c r_iB$.
Clearly, $r_\b r_i (B\cap \a_i^\perp) = B\cap \a_i^\perp = r_\c r_i(B\cap
\a_i^\perp)$.  For simplicity choose $\b$ and $\c$ so that the inner product
with $\a_i$ is $-1$.  Then $r_\b r_i \c = r_\b(\a_i+\c)=\a_i+\b+\c = r_\c r_i
\b$.  Now $r_\b r_i\{\b , \c\} = \{\a_i, \a_i+\b + \c \} =r_\c r_i\{\b,
\c\}$.

Suppose that $\eta$ is another root in $B\setminus\a_i^\perp$. Then, as $B$ is
admissibly closed, there will be a fourth root $\zeta$ in
$B\setminus\a_i^\perp$. In fact, the fourth is $\zeta = \pm
(\b+\c+\eta+2\a_i)$.  Using this observation it is easily checked that both
$r_\b r_i$ and $r_\c r_i$ leave the set $\{\zeta,\eta\}$ invariant.
\end{proof}

We now define a linear representation of the Brauer algebra.  In \cite{CGW2}
simple reflections $h_{B,i}$ of $W(C_\B)$ were defined for nodes $i$ of
$M$ and members $B$ of $\B$ with $\a_i\perp B$. Extend this definition to all
pairs $(B,i)$ by $h_{B,i} = 1$ if $\a_i\not\perp B$. Let $V_\B$ be the free
right $\Z[\delta^{\pm1}][W(C_\B)]$-module with basis $\xi_B$ for $B\in\B$. For
$B\in\B$ and $i$ a node of $M$, set
\begin{equation}\label{def:rho_i}
r_i \xi_B  =  \xi_{r_iB} h_{B,i} .
\end{equation}

\begin{lemma}\label{lm:rep}  There is a unique linear representation
$\rho_\B: W\to {\rm GL}(V_\B)$ determined by (\ref{def:rho_i}) on the
generators of $W$.
\end{lemma}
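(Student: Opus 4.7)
The plan is to verify that the operators $\rho_\B(r_i)$ defined on the basis $\{\xi_B\mid B\in\B\}$ by~(\ref{def:rho_i}) and extended $\Z[\delta^{\pm1}][W(C_\B)]$-linearly satisfy the Coxeter relations for $W$: $r_i^2=1$, $r_ir_j=r_jr_i$ for $i\not\sim j$, and $r_ir_jr_i=r_jr_ir_j$ for $i\sim j$. Uniqueness of $\rho_\B$ is automatic from~(\ref{def:rho_i}), which prescribes the action of every generator on every basis vector, so it suffices to check each Coxeter relation applied to an arbitrary $\xi_B$.

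The quadratic relation is the easiest. Iterating~(\ref{def:rho_i}) yields $r_i^2\,\xi_B=\xi_B\,h_{r_iB,i}h_{B,i}$, and there are two cases. If $\a_i\perp B$, then $r_i$ commutes with each $r_\b$ for $\b\in B$, so $r_iB=B$; as $h_{B,i}$ is a simple reflection of $W(C_\B)$, one has $h_{B,i}^2=1$. If $\a_i\not\perp B$, then $h_{B,i}=1$ by the extension of the definition, and the same holds for $h_{r_iB,i}$: either $\a_i\in B$, which forces $\a_i\in r_iB$, or some $\b\in B$ has $(\a_i,\b)=\pm1$, and then $(\a_i,r_i\b)=\mp1$ shows $\a_i\not\perp r_iB$. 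For the commuting and braid relations I would apply~(\ref{def:rho_i}) iteratively on each side. Because $W$ already acts on $\B$, the two sides produce the same $\xi$-index, namely $\xi_{r_ir_jB}$ respectively $\xi_{r_ir_jr_iB}$, and what remains are the identities
\begin{align*}
h_{r_jB,i}\,h_{B,j}\;&=\;h_{r_iB,j}\,h_{B,i}\quad (i\not\sim j),\\
h_{r_jr_iB,i}\,h_{r_iB,j}\,h_{B,i}\;&=\;h_{r_ir_jB,j}\,h_{r_jB,i}\,h_{B,j}\quad (i\sim j),
\end{align*}
to be checked in $W(C_\B)$.

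These compatibility identities are precisely the Coxeter-type relations for the family $\{h_{B,i}\}$ that are developed in~\cite{CGW2} during the construction of the distinguished generating set of $W(C_\B)$ and the partial order on $\B$, so my plan is to invoke them directly. The main obstacle, and where the case analysis lives, is the braid relation: one branches on whether each of $\a_i,\a_j$ lies in, is perpendicular to, or is non-perpendicular to $B$ (and, consequently, to $r_iB$ and $r_jB$). In the generic subcase $\a_i,\a_j\perp B$, both $r_i$ and $r_j$ fix $B$, each factor $h$ becomes a simple reflection of $W(C_\B)$ at a node whose $M$-adjacency is inherited by $C_\B$, and the identity reduces to a braid or commutation relation inside $W(C_\B)$. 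The remaining subcases either cause factors to collapse to $1$ so that both sides shorten to the same word, or reduce to manipulations of admissible sets that follow from Lemmas~\ref{lemma:unique_root} and~\ref{lm:admclosure}. Once every subcase is settled, all Coxeter relations hold for the $\rho_\B(r_i)$, and the desired representation $\rho_\B$ exists.
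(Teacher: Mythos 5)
Your proposal is correct and follows essentially the same route as the paper: both reduce the Coxeter relations to compatibility identities among the $h_{B,i}$ that are supplied by \cite{CGW2}, and then handle by hand the degenerate cases where $\a_i$ (or $\a_i+\a_j$) lies in $B$ so that factors collapse to $1$ — which is exactly the set of cases the paper checks explicitly, since there the map of \cite{CGW2} returns $0$ rather than $\xi_B$. Your treatment of the quadratic relation matches the paper's, and your sketched case analysis for the commutation and braid relations is the same computation the paper carries out.
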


\begin{proof}
It is shown in \cite{CGW2} that a similar map is a monoid representation.  The
value of $m$ there can be taken to be $0$ here which simplifies some of the
expressions.  The only difference is that in \cite{CGW2} if $\a_i\in B$, the
image under $r_i$ on $\xi_B$ in our set-up is~$0$.  Here we have
$r_i\xi_B=\xi_B$. Thus, we only treat the cases where this rule applies.

We first discuss the case where $\a_i\in B$.  Here we have
$r_i\xi_B=\xi_B$.  It is immediate that in this case $r_i^2\xi_B=\xi_B$ as
needed.  Suppose $i\not \sim j$ and so $r_i$ and $r_j$ commute.  We must show
$r_ir_j\xi_B=r_jr_i\xi_B$.  Clearly $r_jr_i\xi_B=r_j\xi_B$.  But this is
$\xi_{r_jB}h_{B,j}$ by definition. As $\a_i\in B$, also $\a_i\in r_jB$, for
$r_j\a_i=\a_i$ when $i\not \sim j$.  This means
$r_ir_j\xi_B=r_i\xi_{r_jB}h_{B,j} = \xi_{r_jB}h_{B,j}=r_j\xi_B=r_jr_i\xi_B$
and we are done.  Suppose $i\sim j$.  We need to show
$r_ir_jr_i\xi_B=r_jr_ir_j\xi_B$.  Now by definition
$r_ir_jr_i\xi_B=r_ir_j\xi_{B}$.  As
$\a_j=r_ir_j\a_i \in r_ir_jB$, we also have
$r_jr_ir_j\xi_B =r_ir_j\xi_B$ and we are done.

The only other possibility is that in acting by $r_i$ in the case $i\not \sim
j$ or by $r_ir_j$ in the case $i\sim j$ we would have $\a_j\in r_iB$ in the
first case, or $\a_i\in r_jB$ or $\a_j \in r_ir_jB$ in the second case.  If
$i\not \sim j$ and $\a_i\in r_jB$, then $\a_i\in B$ and we are back in the
previous case. 
Suppose therefore $i\sim j$.  As $\a_j\in r_ir_jB$ implies $\a_i\in B$, it
suffices to consider the case where $\a_i\in r_jB$.  Now $\a_i+\a_j\in B$, so
$\a_i\in r_jB$. Moreover,
$r_jr_ir_j\xi_B=r_jr_i\xi_{r_jB}=r_j\xi_{r_jB}=\xi_B$.  This is symmetric in
$i$ and $j$ and we are done.
\end{proof}

The map $\rho_{\B}$ extends to a representation of $\BrM(M)$. 
The action of $\delta$ is by homothety (so
$\delta v = v\delta$ for $v\in V_\B$). Furthermore, the action of $e_i$ is
defined as follows.
\begin{equation}\label{def:rho_e}
e_i \xi_B  = \left\{
\begin{array}{ll}
\xi_B \delta & \mbox{if $\a_i \in B$} ,\\
0 & \mbox{if $\a_i \perp B$} ,\\
r_\beta r_i  \xi_{B} & \mbox{where $\beta \in B$ and $\beta \sim \a_i$}.
\end{array}
\right.
\end{equation}

Before establishing that this is indeed a representation, we prove that the
action of $e_i$ on $\xi_B$ is well defined.  If $B\in\B$, we will write $K_B$
for the subgroup $\{w\in W\mid w\xi_B = \xi_B\}$ of $W$.  Clearly, $vK_Bv^{-1}
= K_{vB}$ whenever $v\in W$.

\begin{lemma}\label{lm:Hgens}
If $i$ is a node of $M$ and $B\in\B$ has elements $\b$ and $\c$
with $\b\sim \a_i\sim \c$,
then $r_ir_\b r_\c r_i\in K_B$.
\end{lemma}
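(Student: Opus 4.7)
The plan is to reduce the identity $r_ir_\b r_\c r_i\xi_B = \xi_B$ to two checks: first, that $w := r_ir_\b r_\c r_i$ preserves $B$ setwise (which implies $w\xi_B = \xi_B h$ for some $h \in W(C_\B)$); second, that the twist $h$ equals $1$.

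For $wB = B$, I will normalize signs so that $(\a_i,\b) = (\a_i,\c) = -1$ and compute using $r_\gamma(v) = v - (v,\gamma)\gamma$. A direct four-step calculation gives $w\b = \c$: namely $r_i\b = \b+\a_i$, then $r_\c(\b+\a_i) = \b+\a_i+\c$, then $r_\b(\b+\a_i+\c) = \a_i+\c$, then $r_i(\a_i+\c) = \c$. By the symmetric roles of $\b$ and $\c$, also $w\c = \b$. For any $\eta \in B\setminus\{\b,\c\}$, orthogonality with $\b$ and $\c$ is automatic, so there are two subcases. If $\eta\perp\a_i$, then each of the four reflections fixes $\eta$, so $w\eta = \eta$. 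If $\eta\sim\a_i$, then Lemma~\ref{lemma:unique_root} applied to the triple $\{\b,\c,\eta\}$ with root $\a_i$ yields the unique fourth positive root $\zeta = 2\a_i+\b+\c+\eta$, which lies in $B$ by admissibility, and an analogous four-step computation produces $w\eta = \zeta$ and $w\zeta = \eta$. Hence $w$ permutes $B$.

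For $h = 1$, I trace $w\xi_B$ through the four factors using the rule $r_j\xi_{B'} = \xi_{r_jB'}\,h_{B',j}$ with the extended convention $h_{B',j} = 1$ when $\a_j\not\perp B'$. The outer applications of $r_i$ are clean: initially $\a_i \not\perp B$ since $\b\in B$ and $\b\sim\a_i$, so $r_i\xi_B = \xi_{r_iB}$ with no twist; at the final stage the intermediate set $r_\b r_\c r_iB$ contains $\a_i+\c$ (from the Part~1 calculation), which is not orthogonal to $\a_i$, so again no twist. For the inner non-simple reflections $r_\b$ and $r_\c$, I expand each as a product of simple reflections $r_{i_1}\cdots r_{i_s}$ and check stage by stage, using the explicit descriptions of the intermediate sets computed in Part~1, that the current admissible set is never orthogonal to the simple root being applied; this keeps every $h_{B',j}$ trivial. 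Combining the four stages, no nontrivial $W(C_\B)$-factor accumulates and therefore $w\xi_B = \xi_{wB} = \xi_B$.

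The main obstacle is the twist calculation for the inner reflections: since $\rho_\B$ has an explicit formula only on simple reflections, the analysis of $r_\b$ and $r_\c$ must be done indirectly. A clean way to organize the bookkeeping is to establish as an auxiliary lemma that $r_\gamma\xi_{B'} = \xi_{B'}$ whenever $\gamma\in B'$ (proved by writing $\gamma = v\a_j$, conjugating the identity $r_j\xi_{v^{-1}B'} = \xi_{v^{-1}B'}$, and using that the left $W$-action commutes with the right $W(C_\B)$-action), and then to verify the remaining non-orthogonality conditions directly against the setwise calculations from Part~1. Admissibility of $B$ together with the hypotheses $\b,\c\in B$ and $\b\sim\a_i\sim\c$ are precisely what is needed to make every intermediate non-orthogonality check succeed.
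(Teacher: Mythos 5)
Your Part~1 (that $w=r_ir_\b r_\c r_i$ stabilizes $B$ setwise, hence $w\xi_B=\xi_B h$ for some $h\in W(C_\B)$) is correct and the computations check out. The gap is in Part~2, precisely at the point you yourself flag as ``the main obstacle'': the twist contributed by the two inner, generally non-simple, reflections. The rule $r_j\xi_{B'}=\xi_{r_jB'}h_{B',j}$ with the dichotomy ``$h_{B',j}=1$ iff $\a_j\not\perp B'$'' is only defined for simple reflections, and your proposed workaround does not close the hole. The auxiliary lemma you state ($r_\gamma\xi_{B'}=\xi_{B'}$ when $\gamma\in B'$) is true and cleanly provable, but it is inapplicable here: $\c\notin r_iB$ (rather $\c+\a_i\in r_iB$) and $\b\notin r_\c r_iB$. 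The remaining plan --- expand $r_\c$ and $r_\b$ into simple reflections and check ``stage by stage'' that no intermediate set is orthogonal to the simple root being applied --- is an unproved assertion: the intermediate sets arising inside a reduced word for $r_\c$ are \emph{not} among the sets $r_iB$, $r_\c r_iB$, $r_\b r_\c r_iB$ computed in Part~1, no particular reduced word is specified, and no argument is offered that any reduced word avoids an orthogonal stage (which would inject a nontrivial simple reflection of $W(C_\B)$ into $h$). Since controlling exactly this twist is the entire content of the lemma, the proof is incomplete as it stands.

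For comparison, the paper's proof never computes the twist of $r_\b$ or $r_\c$ individually. It reformulates the claim as $r_\b r_i\xi_B=r_\c r_i\xi_B$, picks $w\in W$ with $w\a_k=\b$, $w\a_l=\c$, and conjugates: the unknown factor $c\in W(C_\B)$ arising from $w^{-1}\xi_{r_iB}=\xi_{w^{-1}r_iB}\,c$ appears on \emph{both} sides of the desired equality and cancels, reducing everything to the simple reflections $r_k,r_l$ acting on $B'=w^{-1}r_iB$. Both move $B'$, so both act without twist by definition, and $r_kB'=r_lB'$ is exactly the well-definedness of the $e_i$-action on admissible sets already established in Lemma~\ref{lm:admclosure}(v). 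If you want to salvage your approach, you would need either to prove a genuine extension of the no-twist rule to non-simple reflections (e.g.\ that $r_\gamma\xi_{B'}=\xi_{r_\gamma B'}$ whenever $\gamma$ is non-orthogonal to some element of $B'$, which itself requires an argument of the same difficulty), or to adopt a cancellation trick of the paper's kind.
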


\begin{proof}
Take $w\in W$ with $w\a_k = \b$ and $w\a_l = \c$ for nodes $k$ and $l$ of $M$.
Such a $w$ always exists.  Now, as $r_i$ moves $B$, we have $r_\b r_i\xi_B =
r_\c r_i\xi_B $ if and only if $r_\b\xi_{r_iB} = r_\c\xi_{r_iB} $, which holds
if and only if $wr_kw^{-1}\xi_{r_iB} = w r_lw^{-1}\xi_{r_iB} $. This is in
turn equivalent to $r_kw^{-1}\xi_{r_iB} = r_lw^{-1}\xi_{r_iB} $, and hence to
$r_k\xi_{w^{-1}r_iB}c = r_l\xi_{w^{-1}r_iB}c $ for some $c\in W(C_\B)$, which
is obviously equivalent to $r_k\xi_{w^{-1}r_iB} = r_l\xi_{w^{-1}r_iB} $.  Set
$B' = w^{-1}r_iB$.  Observe that $w^{-1}r_i\b$, and $w^{-1}r_i\c$ belong to
$B'$ and are moved by $r_k$ and $r_l$. Therefore, $r_k$ and $r_l$ move $B'$,
and so $r_k\xi_{B'} = \xi_{r_kB'}$ and $r_l\xi_{B'} = \xi_{r_lB'}$.  But
$r_kB' = r_k w^{-1}r_iB = w^{-1} r_\b r_i B = w^{-1} r_\c r_i B = r_l w^{-1}
r_i B = r_lB'$, whence $r_k\xi_{w^{-1}r_iB} = r_l\xi_{w^{-1}r_iB} $.
Therefore $r_\b r_i\xi_B = r_\c r_i\xi_B $, as required.
\end{proof}

Consequently, if $\a_i \sim \b,\c\in B$, the two definitions $r_\b r_i\xi_B$
and $r_\c r_i\xi_B$ of $e_i\xi_B$ coincide, so $e_i\xi_B$ is well defined.

For a set $Y$, we write $\mathcal{F}(Y)$ to denote the monoid
of all maps from $Y$ to itself.

\begin{Thm}\label{prop:BrMonAction}  For
each $M\in ADE$, corresponding Coxeter group $W = W(M)$,  and
$W$-orbit $\B$ in $\AO$, the following holds.
\begin{itemize}
\item [(i)]
There is a unique homomorphim $\sigma:
\BrM(M)\to\mathcal{F}(\AO)$ of monoids determined by
the usual action of the generators $r_i$ and
the $e_i$ action of (\ref{def:sigma_e}) on $\AO$.
If $Y$, $X\in\AO$ and $a\in \BrM(M)$ satisfy $Y\subseteq X$, then
$aY\subseteq aX$.
\item[(ii)] There is a unique linear representation, also denoted $\rho_\B$,
of the Brauer algebra $\Br(M)$ on
$V_\B$ extending the map
$\rho_\B$ of Lemma \ref{lm:rep} with $e_i$ acting according to
(\ref{def:rho_e}).
\end{itemize}
\end{Thm}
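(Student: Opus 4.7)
The plan is to exploit the presentation of $\BrM(M)$ (and hence of $\Br(M)=\Z[\BrM(M)]$) by the generators and defining relations of Table~\ref{table:BrauerRels}: define $\sigma$ and $\rho_\B$ on generators by the prescribed formulas, then verify that every relation holds under the action. By the universal property, this extends uniquely to the asserted monoid homomorphism and to the asserted linear representation.

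For part (i), the relations among the $r_i$ and the $\delta$-relations are automatic, since the $r_i$ already generate the standard conjugation action of $W$ on reflection sets and $\delta$ acts trivially. The map $\sigma(e_i)$ on $\AO$ is well defined by Lemma~\ref{lm:admclosure}(v). The remaining mixed relations are checked by a case analysis on the position of $\a_i$ (and, for the double-subscript relations, also of $\a_j$) relative to $B\in\AO$, using the three cases of~(\ref{def:sigma_e}) and parts (i)--(iv) of Lemma~\ref{lm:admclosure}. For instance, (HSee) is immediate from Lemma~\ref{lm:admclosure}(i), since $\a_i\in e_iB$ always forces the first case of~(\ref{def:sigma_e}) on the second application; (RSre) and (RSer) match in each case; (HCee) and (HCer) reduce to the observation that the data built by $\sigma(e_j)$ is either orthogonal to $\a_i$ or preserved by $r_i$. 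The monotonicity statement $Y\subseteq X\Rightarrow aY\subseteq aX$ then reduces by induction on word length to the generators: it is obvious for $r_i$, and for $e_i$ it follows from a short case split using monotonicity of admissible closure together with Lemma~\ref{lm:admclosure}(ii)--(iii).

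For part (ii), the representation of $W$ on $V_\B$ is Lemma~\ref{lm:rep}, the action of $\delta$ is by central homothety, and $e_i$ is defined by~(\ref{def:rho_e}), with well-definedness furnished by Lemma~\ref{lm:Hgens}. It remains to verify each defining relation of Table~\ref{table:BrauerRels} on every basis vector $\xi_B$. The verification again proceeds along the three cases of~(\ref{def:rho_e}): if $\a_i\in B$ then $e_i$ is scalar multiplication by $\delta$ and $\delta$-centrality handles everything; if $\a_i\perp B$ then $e_i\xi_B=0$ and one must check that the other side of each relation also annihilates~$\xi_B$, either directly or because the intermediate set produced still has $\a_i$ orthogonal to it; if $\a_i\not\perp B$ and $\a_i\notin B$ then $e_i\xi_B=r_\b r_i\xi_B$, so the relation reduces to an identity already supplied by the $W$-representation of Lemma~\ref{lm:rep}. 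The $h_{B,i}$ factors for those $B$ with $\a_j\perp B$ must be tracked through each equality.

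The main obstacle is the consistent treatment of the mixed relations (HNrer) and (RNrre) and, for part (ii), the auxiliary relations of Table~\ref{AddBrauerTable}: here the case analysis must simultaneously follow the positions of $\a_i$, $\a_j$ (and $\a_k$) relative to $B$, and on $V_\B$ the $W(C_\B)$-valued factors on the two sides must coincide. The key geometric input reconciling the two sides is Lemma~\ref{lemma:unique_root}: the fourth root it produces is exactly the one added to the admissible closure when a new $\a_i$ is orthogonal to $B$, so that the new element of $\B$ obtained via the admissible-closure branch of~(\ref{def:sigma_e}) agrees with the one obtained via the $r_\b r_i$ branch after an application of an adjacent~$r_j$. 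Results from \cite{CGW2} on the partial order on $\B$ and the structure of $C_\B$ then ensure that the $h_{B,i}$'s land in $W(C_\B)$ and multiply compatibly, closing the verification.
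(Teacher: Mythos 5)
Your overall strategy is the same as the paper's: define $\sigma$ and $\rho_\B$ on generators, observe that the pure $W$-relations and the $\delta$-relations come for free (from the $W$-action on $\AO$ and from Lemma~\ref{lm:rep}), and then verify the relations of Table~\ref{table:BrauerRels} involving $e_i$ by a case split on the position of $\a_i$ (and $\a_j$) relative to $B$. The well-definedness inputs (Lemma~\ref{lm:admclosure}(v) and Lemma~\ref{lm:Hgens}) and the monotonicity argument by induction on word length are correctly identified and match the paper. One small point: you do not need to verify the relations of Table~\ref{AddBrauerTable} separately, since Lemma~\ref{additionalrel} derives them from the defining relations; any map respecting Table~\ref{table:BrauerRels} automatically respects them.

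There is, however, a genuine gap: your treatment of (HCee) is not a proof, and the one-line justification you give for it is false. You claim that for $i\not\sim j$ ``the data built by $\sigma(e_j)$ is either orthogonal to $\a_i$ or preserved by $r_i$,'' but this fails exactly in the cases that consume most of the paper's proof, namely when there is $\b\in B$ with $\a_i\sim\b\sim\a_j$ (and no disjoint pair of witnesses): then $e_jB=r_\b r_jB$ contains roots that are neither orthogonal to $\a_i$ nor fixed by $r_i$, and one must compare $e_ie_jB$ with $e_je_iB$ by explicit computation. The paper isolates a condition~(*), and then splits further according to whether one or two roots of $B$ are joined to both $\a_i$ and $\a_j$, whether $(\a_i,\b)=\pm1$ (leading to $\A_3$ versus $\D_4$ configurations), and whether $\a_j$ is joined to four roots of $B$ (leading to a $\D_4$ or $\D_5$ configuration), using admissibility via Lemma~\ref{lemma:unique_root} to produce the fourth root $\b+\c+\eps+2\a_j$ and explicit reflection identities such as $r_{\b+\a_j+\eta}r_ir_\eta r_j=r_\eta r_jr_\b r_i$ to reconcile the two sides. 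None of this is supplied or even correctly located by your proposal — you instead identify (HNrer) and (RNrre) as the main obstacle, whereas those are comparatively routine three-case checks. The corresponding verifications for $\rho_\B$ in these (HCee) subcases (e.g.\ showing $r_i\xi_B=r_j\xi_B$ in the $\A_3$ configuration, or that both sides vanish in the $\D_4$ configuration) are likewise missing. Without this case analysis the claim that $\sigma$ and $\rho_\B$ respect (HCee) is unproved, so the theorem does not follow from what you have written.
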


\begin{proof}
In order to show that $\sigma$ and $\rho_{\B}$ are homomorphisms,
we need to show that they respect the defining relations of $\BrM(M)$.  For
$\sigma$, as the action by $W$ is a group action, and for $\rho_{\B}$, as the
restriction to $W$ is a group representation by Lemma~\ref{lm:rep}, we only
need check the relations for $\BrM(M)$ involving $e_i$'s.
We check both parts at the same time for each of 
these relations in Table \ref{table:BrauerRels}.

We abbreviate $C_\B$ to $C$.
On several occasions, we will use the observation
that, if $e_i\xi_B\ne 0$, then $e_i\xi_B \in\xi_{B'}W(C)$
for $B'\in \B$ with $\a_i\in B'$.  We will then write $e_i\xi_B=\xi_{B'}h$
with $h\in W(C)$.  

\nl (RSer).  For (i) we need to verify $e_ir_iB = e_iB$.  If $\a_i\in B$ or
$\a_i\perp B$, then $r_iB = B$, so we are done.  Suppose, therefore, that
there is $\b\in B\setminus\a_i^\perp$.  Then $r_i\b\in r_i(B\setminus
\a_i^\perp)$ and $\a_i\in e_iB$.  This implies $r_ie_iB=e_iB$.  Now $e_iB =
r_i(e_iB) = r_i r_\b r_i B = r_{r_i\b} r_i (r_i B)=e_ir_iB$, as required.

For (ii), the representation, we must show $e_ir_i\xi_B=e_i\xi_B$.  If
$\a_i\in B$, then $r_i\xi_B = \xi_B$, so we are done.  If $\a_i\perp B $ then
$r_i\xi_B=\xi_Bh_{B,i}$.  But $e_i\xi_B=0$ so both sides are $0$.  Suppose,
therefore, that there is $\b\in B\setminus\a_i^\perp$.  Then $e_i\xi_B=r_\b
r_i\xi_B=r_\b \xi_{r_iB}$.  Now $r_iB$ contains $r_i\b $ which is not
perpendicular to $\a_i$.  Hence $e_ir_i\xi_B=e_i \xi_{r_i B}
=r_{r_i\b}r_i\xi_{r_iB}=r_ir_\b\xi_{r_iB}=r_ir_\b r_i\xi_B$.  Notice that
$\a_i\in r_\b r_iB$ and so there is $h\in W(C)$ such that $e_ir_i\xi_B=r_ir_\b
r_i\xi_B=r_i\xi_{r_\b r_iB}h=\xi_{r_\b r_iB}h= r_\b r_i\xi_B=e_i\xi_B$, as
required.

\nl (RSre).  Here, for (i), we need to show
$r_i e_iB = e_iB$. As $\a_i\in e_i B$, this is immediate.

For (ii) we need to show
$r_i e_i\xi_B = e_i\xi_B$. If $\a_i\in B$, both sides are equal to $\xi_B$ and
if $\a_i\perp B$, both sides are equal to $0$.
Suppose $\b\in B$ 
is not perpendicular and not equal to $\a_i$.
Now $e_i\xi_B=r_\b r_i\xi_B=\xi_{r_\b r_iB}h$ for some $h\in W(C)$.  As
$\a_i\in r_\b r_iB$,
the reflection $r_i$ fixes $\xi_{r_\b r_iB}$
and so $r_i e_i\xi_B=r_i\xi_{r_\b r_iB}h = 
\xi_{r_\b r_iB}h=  e_i\xi_B$, as required.

\nl (HSee). For (i) we need to derive $e_i e_iB =  e_iB$.
As $\a_i\in e_i B$, this is immediate.

For (ii) we need $e_i e_i\xi_B= \delta e_i\xi_B$.  If $\a_i\in B$ or
$\a_i\perp B$, this is immediate. Otherwise $e_i\xi_B=\xi_{B'}h$ with $h\in
W(C)$ and $B'\in \B$ containing $\a_i$, and so the equality follows from
$e_i\xi_{B'}=\delta \xi_{B'}$.

\nl (HCer).  Here $i\not\sim j$. For (i) we need to show $e_i r_jB = r_je_iB$.
If $\a_i\in B^\perp$, then $\a_i$ is also in $(r_jB)^\perp$ and so the result
in both cases is the closure of $r_jB\cup\{ \a_i\}$.  If $\a_i \in B$, then
$r_j\a_i=\a_i$ and so $\a_i\in r_jB$.  Now $e_ir_jB=r_jB$ and $r_je_iB=r_jB$.
Now suppose there is $\b\in B$ with $(\a_i,\b)\neq 0$.  Then $r_je_iB=r_jr_\b
r_iB$.  Also $(\a_i,r_j\b)\neq 0$ and so $e_ir_jB=r_{r_j\b}r_ir_jB$.  Now
again $r_{r_j\b}=r_jr_\b r_j$ giving the last term $r_jr_\b r_jr_ir_jB=r_jr_\b
r_iB$ as $r_i$ and $r_j$ commute and are of order two.

For (ii) we need to show $e_i r_j\xi_B =
r_je_i\xi_B$.  If $\a_i\in B^\perp$, then $\a_i$ is also in $(r_jB)^\perp$ and
so the result is $0$ in both cases.  If $\a_i \in B$, then $r_j\a_i=\a_i$ and
so $\a_i\in r_jB$.  Now $e_ir_j\xi_B=e_i\xi_{r_j B}h_{B,j}=\delta \xi_{r_j
B}h_{B,j} =r_j\xi_B \delta =r_je_i\xi_B$.  Suppose there is $\b\in B$ with
$\a_i\sim \b$.  Then also $\a_i\sim r_j\b$ and so $r_je_i\xi_B=r_jr_\b
r_i\xi_B =r_{r_j\b}r_ir_j\xi_B =e_i\xi_{r_jB}h_{B,j} = e_ir_j\xi_B$.

\nl (HCee).  Here $i\not\sim j$. We need to show $e_i e_jB = e_je_iB$ and $e_i
e_j\xi_B = e_je_i\xi_B$.  Suppose $\a_i\in B$. Then $e_je_iB = e_jB$.  Note
$\a_i\in e_jB$ in all cases and so also $e_ie_jB=e_jB$ so they are the same.
For the linear representation, $e_je_i\xi_B = e_j\xi_B\delta$.  If
$e_j\xi_B=0$ we are done as both sides of the required equality are $0$.
Otherwise, $e_j\xi_B = \xi_{B'}h$ with $h\in W(C)$ and $\a_i\in B'\in \B$ and
so also $e_ie_j\xi_B=e_j\xi_B\delta $, as required.  By symmetry of the
argument in $i$ and $j$, we may, and will, assume from now on that
$\a_i,\a_j\not\in B$.

Suppose next $\a_i\perp B$.  Then $e_iB=\cl{(B\cup \{\a_i\})}$.  We will use
the observation that, for $X\in\AO$ we have $e_kX = \cl{(X\cup\{\a_k\})}$
whenever $\a_k\in X\cup X^\perp$.  Suppose first $\a_j\perp B$.  Then, by
Lemma~\ref{lm:admclosure}(iii), $\a_j\in e_iB\cup {(e_iB)}^\perp$, so $e_je_i
B = e_j \cl{(B\cup \{\a_i\})}= \cl{(\cl{(B\cup \{\a_i\})}\cup\{\a_j\})}=
\cl{(B\cup \{\a_i,\a_j\})}$ is symmetric in $i$ and $j$, so we are done.  

If $\a_j\not\perp B$, then there is $b\in B$ with $\b\sim \a_j$. As
$\b\perp\a_i\perp\a_j$, using Lemma~\ref{lm:admclosure}~(iv), we find $e_je_i
B = r_\b r_j \cl{(B\cup \{\a_i\})}= \cl{(r_\b r_jB\cup \{\a_i\})}= e_i r_\b
r_jB = e_ie_jB$, as required.  For the representation, the arguments above for
all cases where $\a_i\perp B$ give $0$ here and there is nothing to prove. By
symmetry, we can suppose, for the remainder of the proof of (HCee), that
neither $\a_i$ nor $\a_j$ is in $B\cup B^\perp$.

This means there are $\b$ and $\b'$ in $B$ with $\a_i\sim \b$ and $\a_j\sim
\b'$.  Suppose $\a_j\not \sim \b$ and $\a_i\not \sim \b'$ and $\b\neq \b'$.
Then $e_ie_jB=e_ir_{\b'} r_jB=r_{\b}r_ir_{\b'} r_jB$ as we may use $\b$ to
give the action of $r_i$ (here we use that $r_{\b'}r_j\b=\b$).  Similarly
$e_je_iB=r_{\b'}r_jr_{\b} r_iB$.  By orthogonality of the roots involved in
commutation, $r_{\b}r_ir_{\b'} r_j=r_{\b'}r_jr_{\b}r_i$ and so $e_ie_jB
=r_{\b}r_ir_{\b'} r_jB = r_{\b'}r_jr_{\b}r_iB = e_je_iB$, as required.  For
the representation replace each $B$ by $\xi_B$ and the result follows.

We are done if such a choice of $\b$ and $\b'$ is possible.  Assume for the
remainder of the proof of (HCee) that such a choice is not possible.
During these arguments it will be useful to have a term for this.
We say $i$ and $j$ satisfy condition (*) if
\begin{itemize}
\item[]
$i\not\sim j$, there is a
$\b\in B$ with $\a_i\sim \b\sim \a_j$, and $B$ has
no pairs $\c$, $\c'$ for which
$\a_i\sim \c$, $\a_j\sim \c'$, $\a_i\not \sim \c'$, and $\a_j\not \sim \c$.
\end{itemize}

We suppose from now on in proving (HCee) that $i$ and $j$ satisfy condition
(*).  Suppose $\b$ is the only element of $B$ joined to $\a_i$ or $\a_j$.
Then $e_iB=\{\a_i\}\cup B\setminus\{ \b\}$ and $e_je_iB=\cl{(\{\a_j,\a_i\}\cup
B\setminus \{\b\})}$.  This is symmetric in $i$ and $j$ and we are done for
the poset part.  For the representation, as above, $e_i\xi_B=r_\b
r_i\xi_B=\xi_{\{\a_i\}\cup B\setminus\{\b\}}=\xi_{e_iB}$.  Now $\a_i$ is
orthogonal to all elements in $e_iB$ and so $e_je_i\xi_B=0$.  This is
symmetric in $i$ and $j$ and we are done.

The most difficult condition is when there is a second root $\c$ in $B$
also joined to both $\a_i$ and $\a_j$.  We assume for the moment there is no
such $\c$.  This means up to interchanging $i$ and $j$ there are $\c$ in $B$
with $\c\sim\a_i$ and $\a_j$ is not joined to any of them.  In fact now as $i$
and $j$ satisy (*), $\a_j$ is joined only to $\b$.  Now $e_jB=\{\a_j\}\cup
B\setminus \{\b\}$ and $e_ie_jB=r_{\c}r_i(\{\a_j\}\cup B\setminus \{\b\})$.
Notice $\a_j$ is in this set, so
$e_ie_jB = r_\c r_i r_\b r_j B$.  
Now consider $e_iB=r_{\c}r_iB.$ The only
element of $r_{\c}r_iB$ not perpendicular to $\a_j$ is $r_\c r_i \b$ and so
$e_je_iB =  (r_\c r_i r_\b r_i r_\c) r_j r_{\c}r_iB$.
But
\begin{eqnarray*}
 (r_\c r_i r_\b r_i r_\c) r_j r_{\c}r_i  &=& 
r_\c r_i r_\b r_i  r_j r_i  = r_\c r_i r_\b r_j,
\end{eqnarray*}
whence $e_ie_jB = e_je_iB$.  The same computations work for $\rho_\B$.

Suppose now that $B$ has roots $\b$ and $\c$, both joined to $\a_i$ and to
$\a_j$.  There are two cases to be considered. Since the roots in elements of
$\AO$ are all supposed to be positive, we will take the liberty of indicating
the positive root by its negative whenever convenient. Since confusion is
minimal, we shall write $\{\a\}$ rather than $\{\pm\a\}\cap\Phi^+$. By
changing positive roots to negatives we can assume that the inner products of
$\a_j$ with $\b$ and $\c$ are negative and that the inner product of $\a_i$
with $\c$ is negative.  There are now two choices $\pm 1$ for $(\a_i,\b)$.  

If $(\a_i,\b) = -1$, the Gram matrix has determinant~$0$ and an easy check
shows $-\a_i=\a_j+\b+\c$.  In this case the roots involved generate a root
system of type $\A_3$; an example of the configuration occurs for $\a_i =
\a_1$, $\c = \a_2$, $\a_j = \a_3$ and $\b = -(\a_1+\a_2+\a_3)$ with
$\a_1,\a_2,\a_3$ the simple roots of $\A_3$.

If $(\a_i,\b) = 1$, the roots involved generate a root system of type $\D_4$.
An example of the configuration occurs for $\a_i = \a_1$, $\c = \a_2$, $\a_j =
\a_3$ and $\b = \a_2+\a_3+\a_4$ where $\a_1,\a_2,\a_3,\a_4$ are the simple
roots of $\D_4$ with $2$ the triple node.

We suppose first that all roots $\b'$ of $B$ other than $\b$ and $\c$ are
orthogonal to $\a_i$ and $\a_j$.  In the $\A_3$ case we have $\c+\a_j+\b
=-\a_i$ and so $e_jB = r_{\b}(\{\b+\a_j , \c+\a_j\}\cup
B\setminus\{\b,\c\})= \{\a_j , \a_i\}\cup B\setminus\{\b,\c\}$, whence
$e_ie_jB = \{\a_j , \a_i\}\cup (B\setminus\{\b,\c\})$.  The other order gives
the same result and so $e_ie_jB=e_je_iB$.  For the representation,
$e_j\xi_B=r_\b r_j\xi_B$. We have just seen $e_jB =\{\a_j , \a_i\}\cup
(B\setminus\{\b,\c\})$.  This means $e_j\xi_B=\xi_{r_\b r_jB}$ and so
$e_ie_j\xi_B=r_\b r_j\xi_B\delta$.  Similarly, $e_je_iB = r_\b
r_i\xi_B\delta$.  For these to be equal we would need $r_i\xi_B=r_j\xi_B$.
{From} the definition this is $\xi_{r_iB}=\xi_{r_jB}$ and so is equivalent to
$r_iB = r_jB$. As, up to the signs of roots, $r_i\{\b,\c\} =
\{\b+\a_i,\c+\a_i\} = \{\b+\a_j,\c+\a_j\} =r_j\{\b,\c\}$, this is indeed the
case.

There is one other case in which all roots of $B$ other than $\b$ and $\c$ are
orthogonal to $\a_i$ and to $\a_j$, viz., $(\a_i,\b)=1$ and $\a_i,\a_j,\b,\c$
generate a root system of type $\D_4$.  Now $e_jB =r_\b r_j B=\{\a_j,
\c+\a_j+\b\}\cup (B\setminus\{\b,\c\})$.  Notice $(\a_i, \c+\a_j+\b)=0$ and so
$e_i(\{\a_j,\c+\a_j+\b\}\cup B\setminus\{\b,\c\})=
\cl{(\{\a_i,\a_j,\c+\a_j+\b\}\cup
B\setminus\{\b,\c\})}$.  Also $e_je_iB=e_j(\{\a_i, \b-\a_i-\c\}\cup
(B\setminus\{\b,\c\}))=(\{\a_j, \a_i, \b-\a_i-\c\}\cup
(B\setminus\{\b,\c\}))^{cl}$.  Now $(\c,\b-\a_i-\c)=-1$ and so in the closure
of $\{\a_j, \a_i, \b-\a_i-\c\}\cup (B\setminus\{\b,\c\})$ there is
$\a_i+\a_j+\b-\c-\a_i+2\c=\a_j+\b+\c$.  This means $\cl{(\{\a_j , \a_i,
\b-\a_i-\c\}\cup(B\setminus\{\b,\c\}))} = \cl{(\{\a_i , \a_j, \c+\a_j+\b,
\b-\c-\a_i\}\cup(B\setminus\{\b,\c\}))}$ and so $e_je_iB=e_ie_jB$.  For the
representation, the actions are all the $0$ action and so the required
equality is trivially satisfied.

This concludes the cases where $\a_i$ and $\a_j$ are joined only to $\b$ and
$\c$.  In the remaining cases, we may assume $\a_j$ is not orthogonal to at
least three roots in $B$ and so, because $B$ is admissible, $\a_j$ is
orthogonal to four roots of $B$.  This means there is $\eps\in B$ with
$(\a_j,\eps)=-1$ and $\eta=\b+\c+\eps+2\a_j$ is also in $B$.  
If $\a_i$ were not joined to
all the roots $\{\b,\c,\e,\eta\}$ but joined to another we would contradict
condition (*).  If it were joined to three it would be joined to four by the
admissibility.

If $\a_i$ were joined to all four of them consider the 4-dimensional linear
subspace of $\R^n$ spanned by the roots $\b, \c,\e,\eta$.  Both $\a_i$ and
$\a_j$ lie in this space and so the six roots generate a root system of type
$\D_4$. Here these elements must be two orthogonal vectors together with their
mates (meaning the unique other root $\zeta$ with the same set of orthogonal
roots in the 4-dimensional subspace). Also $\a_i$ and $\a_j$ must also be
orthogonal mates.  In this case the actions of $r_i$ and $r_j$ must be the
same and so the actions of $e_i$ and $e_j$ must be the same.  In particular
$e_ie_jB=e_i^2B=e_j^2B=e_je_iB$.  Also for the representation the actions of
$r_i$ and $r_j$ must be the same and $e_ie_j\xi_B=e_i^2\xi_B=e_je_i\xi_B$.

\nl The only remaining case occurs when $\a_i$ is joined to just $\b$ and $\c$
as discussed.  Now $\a_i$, $\a_j$, $\b$, $\c$, $\e$, and $\eta $ generate a
root system of type $\D_5$. Again $\b$, $\c$, $\e$, and $\eta $ 
are two pairs of roots together with their
orthogonal mates.  In computing $e_je_iB$ we can use $\b$ first and then
$\eta$ to get $e_je_iB=r_\eta r_jr_\b r_iB$.  In the other order we can use
$\eta$ first and then $\b+\a_j+\eta$ to compute
$e_ie_jB = r_{\b+\a_j+\eta}r_i r_\eta r_j
B$.  Now we use $r_{\b+\a_j+\eta}=r_\b r_\eta r_j r_\b r_\eta$ and derive
$$
r_{\b+\a_j+\eta}r_i r_\eta r_j = r_\b r_\eta r_j r_\b r_\eta r_i r_\eta r_j
=r_\eta r_\b  r_j r_\b    r_i r_j                                              
= r_\eta r_\b r_j r_\b r_jr_i =
 r_\eta r_j r_\b r_i .
$$ Thus, $e_je_iB = r_\eta r_j r_\b r_i B=r_{\b+\a_j+\eta}r_i r_\eta
r_jB=e_ie_jB$, as required.  The same computations work for $\rho_\B$, which
finishes the proof of (HCee).

\nl (HNrer).  Here $i\sim j$. We need to show $r_i e_jr_i B = r_je_ir_jB$
and $r_i e_jr_i \xi_B = r_je_ir_j\xi_B$.
Suppose first that there is $\b\in B$ with $r_i\b\not\perp\a_j$.
Then $r_ie_jr_iB = r_i r_{r_i\b} r_jr_i B = r_\b r_jr_ir_jB$
and  $r_ie_jr_i\xi_B = r_i r_{r_i\b} r_jr_i \xi_B
= r_\b r_jr_ir_j\xi_B$.
On the other hand, also $(\a_i,r_j\b) = (r_j\a_i,\b) = (r_i\a_j,\b) = (\a_j,r_i\b) \ne 0$,
so $\b\not\perp\a_i$ and so
$r_je_ir_jB = r_j r_{j\b} r_ir_j B = r_\b r_ir_jr_iB$
and $r_je_ir_j\xi_B = r_j r_{j\b} r_ir_j\xi_B = r_\b r_ir_jr_i\xi_B$. 
In view of the braid relation $r_jr_ir_j=r_ir_jr_i$,
both sides are equal.

Next suppose that $\a_j\perp r_iB$. Then $r_ie_jr_i B =
r_i(r_i B\cup \{\a_j\})^{cl} = ( B\cup r_i\{\a_j\})^{cl}$
and $e_jr_i\xi_B =e_j\xi_{r_iB}$ or $0$ if
$\a_i\perp B$.
Also, $\a_i = r_jr_i\a_j\perp r_jr_ir_iB = r_jB$ and so
$r_je_ir_j B = ( B\cup r_j\{\a_i\})^{cl}$ and $e_ir_j \xi_B =0$. 
As $r_i\a_j = r_j\a_i$, the two sides are equal.

Finally, suppose that $\a_j\in r_iB$.  This means $\a_i+\a_j\in B$. Now $r_i
e_jr_i B = r_i r_i B = B$. Moreover, $\a_i= r_jr_i\a_j\in r_j B$ and so $r_j
e_ir_j B = r_j r_j B = B$, as required.  For the representation, this means
$r_ie_jr_i\xi_B=r_jr_ir_{r_i\a_j}r_i\xi_B$. However $r_jr_ir_jr_ir_jr_i=1$
and so both sides are the same.

\nl (HNrre).  Here $i\sim j$. For (i), we need to show $r_jr_ie_jB = e_ie_jB$.
As $\a_j$ is in $e_j(B\setminus\a_i^\perp)$, we have $e_ie_jB=r_j
r_ie_jB$, and we are done.

For (ii) we need to show $r_jr_ie_j\xi_B = e_ie_j\xi_B$.  We may assume
that $e_j\xi_B$ is not $0$.  If $\a_j\in B$, then $e_ie_j\xi_B=\delta
e_i\xi_B=\delta r_j r_i\xi_B$.  As $e_j\xi_B=\delta \xi_B$ we are done.  If
there is $\b \in B$ not perpendicular to $\a_j$, then $e_j\xi_B=r_\b
r_j\xi_B=\xi_{r_\b r_iB}h$. Notice $r_\b r_jB$ contains $\a_j$.  Now
$e_ie_j\xi_B=r_j r_ie_j\xi_B$, and we are done.

\np There is one more property we need to show: if $ Y\subseteq X$, then
$aY\subseteq aX$.  The action by $r_i$ is just the group action which preserves
inclusion so we need only check the actions by $e_i$.
Let $Y\subseteq X$.

Suppose $\a_i\in X$.  Then $e_iX=X$.  If $\a_i\in Y$ then $e_iY=Y$ and we are
done.  If $\a_i\not \in Y$ then $\a_i \perp Y$ as $\a_i\in X$ and elements in
$X$ are mutually orthogonal.  Consequently, $e_iY=(\{\a_i\} \cup Y)^{cl}
\subseteq X = e_iX$, as required.

For the remainder of the proof, we may assume $\a_i \not \in X$.  If
$\a_i\perp X$ then $\a_i\perp Y$.  This means $e_iY=(\{\a_i\}\cup Y)^{cl}$ and
$e_iX=(\{\a_i\}\cup X)^{cl}$ so $Y\cup \{\a_i\}\subseteq X\cup \{\a_i\}$, and
hence $e_iY\subseteq e_iX$, as required.

Suppose that there is $\b\in Y$ with $\a_i\sim\b$.  Then $e_iY=r_\b r_iY$
and $e_iX=r_\b r_iX$ while $r_\b r_i Y \subseteq r_\b r_i X$.  The only case
left is $\a_i \perp Y$ but there is $\b$ in $X$ with $(\a_i,\b)\neq 0$.
Clearly $\b\not \in Y$.  Now $e_iY=(\{\a_i\}\cup Y)^{cl}$ and
$e_iX=r_\b r_iX$.   By Lemma~\ref{lm:admclosure}(ii),
$\a_i\in e_iX$.
As $r_\b r_iY = Y$, we find $e_iY = (\{\a_i\}\cup Y)^{cl} = 
(\{\a_i\}\cup r_\b r_i Y)^{cl} \subseteq \cl{(e_i X)} = e_i X$,
so the assertion holds.
\end{proof}

\begin{Cor}\label{cor:semi-direct}
For $X$ the highest element of $\B$, the permutation stabilizer $N_W(X)$ of
$X$ in $W$ is the semi-direct product of $K_X$ and $W(C_\B)$.
\end{Cor}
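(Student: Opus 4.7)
The plan is to identify $N_W(X)$ as a split extension of $W(C_\B)$ by $K_X$, using the representation $\rho_\B$ constructed in Theorem~\ref{prop:BrMonAction}. First I would check that both subgroups lie inside $N_W(X)$. The inclusion $W(C_\B) \subseteq N_W(X)$ is immediate: for $i \in C_\B$ we have $\a_i \perp X$, so $r_i$ fixes every root of $X$ and hence preserves $X$ as a set. For $K_X \subseteq N_W(X)$, I would record the key feature of $\rho_\B$ that for every $w \in W$ and $B \in \B$ one has $w\xi_B = \xi_{wB}\, h$ for a uniquely determined $h \in W(C_\B)$; this follows by induction on the length of $w$ from the defining formula~(\ref{def:rho_i}), together with the freeness of $V_\B$ over $\Z[\delta^{\pm1}][W(C_\B)]$ on $\{\xi_B\}_{B\in\B}$. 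In particular, $w\xi_X = \xi_X$ forces $wX = X$, so $K_X \subseteq N_W(X)$.

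Next, for $w \in N_W(X)$ the observation above produces a unique element $\phi(w) \in W(C_\B)$ satisfying $w\xi_X = \xi_X\,\phi(w)$. The calculation
$$\xi_X\,\phi(ww') = (ww')\xi_X = w(\xi_X\,\phi(w')) = (w\xi_X)\,\phi(w') = \xi_X\,\phi(w)\phi(w')$$
shows that $\phi \colon N_W(X) \to W(C_\B)$ is a group homomorphism, and by construction $\ker\phi = K_X$. To split $\phi$ it suffices to prove that its restriction to the subgroup $W(C_\B) \subseteq N_W(X)$ is the identity. This reduces to the identity $h_{X,i} = r_i$ for every $i \in C_\B$, which is the normalization of the simple reflections $h_{B,i}$ in \cite{CGW2} applied to the highest element $B_0 = X$. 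Combining these ingredients yields a split short exact sequence $1 \to K_X \to N_W(X) \to W(C_\B) \to 1$, so $N_W(X) = K_X \rtimes W(C_\B)$, as required.

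The main obstacle is the identification $\phi|_{W(C_\B)} = \id$: the elements $h_{B,i}$ are introduced in \cite{CGW2} as abstract simple reflections of a Coxeter group of type $C_\B$, so one has to invoke the explicit construction there to match $h_{X,i}$ at the highest element with the ambient reflection $r_i \in W$. Once that point is settled, the remainder of the argument is routine bookkeeping inside the free $\Z[\delta^{\pm1}][W(C_\B)]$-module $V_\B$.
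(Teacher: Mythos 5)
Your proposal is correct and follows essentially the same route as the paper: both arguments use the representation $\rho_\B$ to characterize $N_W(X)$ as $\{w\in W\mid w\xi_X\in\xi_X W(C_\B)\}$, identify $K_X$ as the kernel of the resulting map to $W(C_\B)$, and invoke the normalization $h_{X,i}=r_i$ for $i\in C_\B$ to see that $W(C_\B)$ is a complement. The paper's proof is just a two-sentence compression of the bookkeeping you spell out.
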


\begin{proof}
{From} (ii) of the theorem we see
$N_W(X)=\{w\in W\mid w\xi_X \in\xi_X W(C_\B)\}$.
As $h_{X,i} = r_i$ for $i$ a node of $C_\B$, the subgroup $W(C_\B)$ of
$N_W(X)$
satisfies
$W(C_\B)\xi_X = \xi_X W(C_\B)$ and so is a complement to $K_X$ in $N_W(X)$.
\end{proof}

\section{Rewriting elements and upper bounding the dimension}
\label{sec:upb}
The main goal of this section is to prove that every element of $\BrM(M)$ can
be written in a certain standard form, which corresponds to the well-known
Brauer diagrams if $M = \A_{n-1}$. This will lead to the following upper bound
of the dimension of $\Br(M)$.  Recall that $C_\B$ is the set of nodes of $M$
whose corresponding roots are orthogonal to the highest element of $\B$.

\begin{Prop}\label{prop:upb}
The dimension of the Brauer algebra of type $M$ is at most
$$ \sum_{\B} |\B|^2 |W(C_\B)|.$$
\end{Prop}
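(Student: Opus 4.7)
The plan is to show that every monomial in $\BrM(M)$ can be rewritten, using the relations of Tables \ref{table:BrauerRels} and \ref{AddBrauerTable}, into a canonical form parameterised by a triple $(B_1, B_2, w)$, where $B_1, B_2$ lie in a common $W$-orbit $\B \subseteq \AO$ and $w \in W(C_\B)$. Counting such triples then yields the bound $\sum_{\B} |\B|^2 |W(C_\B)|$; the central factor $\delta^{\pm1}$ contributes no extra basis elements over $\Z[\delta^{\pm 1}]$.

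To set up the canonical form, for each admissible set $B$ I would define an element $e_B \in \BrM(M)$ as a suitable product of conjugates $w_\beta^{-1} e_{i_\beta} w_\beta$ over the roots $\beta \in B$, where $w_\beta \in W$ satisfies $w_\beta \alpha_{i_\beta} = \beta$. Using (HCee), (HNrer), (HNeee), (RNere), and Lemma \ref{lm:admclosure}, one verifies that $e_B$ depends, modulo $\delta^{\pm1}$, only on $B$; that it is absorbed by $r_\gamma$ and $e_\gamma$ for $\gamma \in B$; and that right-multiplication of $e_B$ by $e_i$ mirrors the $\sigma$-action of Definition \ref{def:Baction}. Fixing for each orbit $\B$ its highest element $B_0$, and choosing via Corollary \ref{cor:semi-direct} coset representatives $u_B$ of $W/N_W(B_0)$ with $u_B B_0 = B$, the proposed canonical form is
\[
f(B_1, B_2, w) \;=\; u_{B_1}\, e_{B_0}\, w\, u_{B_2}^{-1}, \qquad B_1, B_2 \in \B,\ w \in W(C_\B).
\]
Since $w \in W(C_\B)$ centralises every $e_\beta$ with $\beta \in B_0$, the positioning of $w$ in this product is immaterial.

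The heart of the proof is then an induction on the length of a word $a = g_1 \cdots g_k$, showing that $a = \delta^{\ell} f(B_1, B_2, w)$ for some triple and some $\ell \in \Z$. The inductive step reduces to right-multiplying a canonical form by a single generator $g$. When $g = r_i$, the braid relations combined with the semidirect decomposition $N_W(B_0) = K_{B_0} \rtimes W(C_\B)$ allow one either to replace $u_{B_2}^{-1} r_i$ by a new coset representative, or to move the $K_{B_0}$-part across $e_{B_0}$ (where it acts trivially) and the $W(C_\B)$-part into $w$. When $g = e_i$, the three cases of (\ref{def:sigma_e}) apply to $B_2$: either $e_i$ produces a factor $\delta$ (when $\alpha_i \in B_2$), passes to a different orbit whose highest element dominates $\cl{B_2 \cup \{\alpha_i\}}$ (when $\alpha_i \perp B_2$), or is absorbed as a product $r_\beta r_i$ for $\beta \in B_2$ non-orthogonal to $\alpha_i$. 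Theorem \ref{prop:BrMonAction}(i) guarantees that the triple assigned to $ag$ is exactly the one predicted by these combinatorial rules.

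The principal obstacle is verifying the orbit-transition identities as genuine equalities in $\BrM(M)$, and not merely under the $\sigma$-action. Concretely, one must establish relations of the form $u_{B_1} e_{B_0} w u_{B_2}^{-1} e_i = \delta^k\, u_{B_1'} e_{B_0'} w' u_{B_2'}^{-1}$, which require commutation lemmas showing that $e_B$ slides past arbitrary elements of $W$ in the expected way. The representations $\rho_\B$ of Theorem \ref{prop:BrMonAction}(ii) serve as a consistency check but do not by themselves establish identities in $\BrM(M)$; one must exploit the extra relations of Table \ref{AddBrauerTable}, especially (RTerre) and (HTeere), precisely at the steps where $e_i$ merges into a neighbouring factor of $e_{B_0}$ and the underlying admissible set grows by closure. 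Once all cases are handled, every element of $\BrM(M)$ modulo $\langle\delta^{\pm 1}\rangle$ lies among the $f(B_1, B_2, w)$, giving the asserted upper bound.
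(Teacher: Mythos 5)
Your proposal follows essentially the same route as the paper: define $e_\beta$ by conjugation and $e_B$ as a product over the roots in $B$, reduce every monomial to the form $u\,e_{B_0}\,z\,v^{-1}\delta^k$ via the three-case absorption rule for multiplication by $e_i$ (the paper's Lemma \ref{lemma:action}(ii), fed by the closure identity of Lemma \ref{lemma:adm_idmptnts}), and then count triples using the semidirect decomposition of the normalizer (Propositions \ref{prop:AX=Kernel} and \ref{prop:monomials}). The one assertion you make in passing that is not routine is that the $K_{B_0}$-component of an element of $N_W(B_0)$ ``acts trivially'' when slid across $e_{B_0}$: this is the monoid identity $K_{B_0}\subseteq A_{B_0}$ of Proposition \ref{prop:AX=Kernel}(ii), which the paper itself establishes only by a case-by-case verification, so it belongs on your list of ``principal obstacles'' alongside the orbit-transition identities you correctly single out.
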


This will be proved in a series of lemmas and propositions
and completed at the end of this section.

\begin{lemma}\label{lemma:norm_e}
Let $i$ and $j$ be nodes of $M$.
If $w\in W$ satisfies $w\a_i = \a_j$, then
$w e_i w^{-1} =  e_j$.
\end{lemma}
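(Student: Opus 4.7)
The plan is to induct on the length $\ell(w)$ of $w$ in $W$. The base case $\ell(w)=0$ forces $w=1$ and $i=j$. For the inductive step, choose a left descent $k$ of $w$, so $w=r_k w'$ with $\ell(w')=\ell(w)-1$ and $w'\alpha_i = r_k\alpha_j$. Since $w^{-1}\alpha_j=\alpha_i>0$, the node $j$ is not a left descent of $w$, hence $k\neq j$. I then split on whether $k\sim j$ in $M$ or not.

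If $k\not\sim j$, then $\alpha_k\perp\alpha_j$ gives $r_k\alpha_j=\alpha_j$, so $w'\alpha_i=\alpha_j$. The inductive hypothesis supplies $w' e_i (w')^{-1}=e_j$, and (HCer) yields $w e_i w^{-1}=r_k e_j r_k = e_j$.

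The harder case is $k\sim j$, where $r_k\alpha_j=\alpha_k+\alpha_j$ is not simple. Here I pass to $w'' := r_j w' = r_j r_k w$; since $r_j(\alpha_k+\alpha_j)=\alpha_k$, we get $w''\alpha_i=\alpha_k$. The main obstacle is to verify $\ell(w'')<\ell(w)$ so the inductive hypothesis applies. Compute $(w')^{-1}\alpha_j = \alpha_i+w^{-1}\alpha_k$ and set $\gamma := -w^{-1}\alpha_k$, which is a positive root because $k$ is a left descent of $w$. Since $w$ is an isometry, $(\alpha_i,\gamma)=(\alpha_j,-\alpha_k)=1$ (using $j\sim k$), so in the ADE root system $\gamma-\alpha_i$ is itself a root. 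A comparison of coefficients against the simple roots shows that any positive root $\gamma$ with $\gamma\leq\alpha_i$ in the root partial order must equal $\alpha_i$; but $\gamma=\alpha_i$ would give $w\alpha_i=-\alpha_k$, contradicting $w\alpha_i=\alpha_j$. Therefore $\gamma>\alpha_i$, so $(w')^{-1}\alpha_j<0$ and $\ell(w'')=\ell(w)-2$.

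With $w''\alpha_i=\alpha_k$ and $\ell(w'')<\ell(w)$, the inductive hypothesis gives $w'' e_i (w'')^{-1}=e_k$. Writing $w=r_k r_j w''$, the claim reduces to $r_k r_j e_k r_j r_k = e_j$; this is exactly relation (HNrer) $r_j e_k r_j = r_k e_j r_k$ conjugated by $r_k$, after cancelling $r_k^2=1$. The single subtle step, and the one I expect to require the most care, is the positivity argument for $\gamma-\alpha_i$; the remainder is a mechanical deployment of (HCer) and (HNrer).
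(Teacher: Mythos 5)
Your proof is correct, but it takes a genuinely different route from the paper's. The paper factors $w$ as $w_{ij}c$ where $w_{ij}$ is the minimal-length element carrying $\alpha_i$ to $\alpha_j$ and $c$ stabilizes $\alpha_i$, then imports two facts from \cite{CGW}: that $w_{ij}e_iw_{ij}^{-1}=e_j$ (analogue of Lemma 3.1(iv) there, via (HNree) and (HNeer)), and that $C_W(\alpha_i)$ centralizes $e_i$ because it is generated by reflections $r_\beta$ with $\beta$ orthogonal or equal to $\alpha_i$ (analogue of Lemma 3.9 there). Your induction on $\ell(w)$ replaces all of this with a self-contained descent argument: the non-adjacent case is immediate from (HCer), and in the adjacent case the root-theoretic computation showing $(w')^{-1}\alpha_j=\alpha_i-\gamma$ with $\gamma>\alpha_i$ (which I checked: $(\alpha_i,\gamma)=1$ makes $\gamma-\alpha_i$ a root, and a positive root strictly below a simple root in the dominance order is impossible) correctly yields $\ell(r_jr_kw)=\ell(w)-2$, after which (HNrer) conjugated by $r_k$ closes the induction. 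What your approach buys is independence from the external citations and from the nontrivial structural fact about reflection centralizers, using only the defining relations of Table \ref{table:BrauerRels} rather than the derived relations of Table \ref{AddBrauerTable}; what the paper's approach buys is that the decomposition $N_W(\alpha_i)$-structure it develops is reused later (e.g.\ in Proposition \ref{prop:AX=Kernel}), so the work is not wasted. Both are valid; yours is arguably the cleaner stand-alone proof of this particular lemma.
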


\begin{proof}
By \cite[Proposition 3.2]{CGW}, there is a unique element $w_{ij}$ of minimal
length such that $w_{ij}\a_i=\a_j$.  This can be proved exactly as in
\cite[Lemma 3.1(iv)]{CGW}, by use of (HNree) and (HNeer).  It remains to
verify that $C_W(\a_i)$ centralizes $e_i$.  This is proved as in \cite[Lemma
3.9]{CGW}, where it was shown $s_is_\b = s_\b s_i$ for any root $\b$ of $W$
orthogonal or equal to $\a_i$, where $s_\b$ is the product in the Artin group
of the simple generators corresponding to a minimal length word for $r_\b\in
W$.  Here we replace $s_i$ by $e_i$ and use (HNree) and (HNeer) appropriately.
Since $C_W(\a_i)$ is generated by such reflections $r_\b$, this establishes
the lemma.
\end{proof}

Consider a positive root $\b$ and a node $i$ of $M$.
There exists $w \in W$ such that $\b = w\a_i$. 
Define the element $e_\b$ of $\Br(M)$ by
\begin{equation}\label{def:e_t}
e_\b = w e_i w^{-1}.
\end{equation}

Lemma \ref{lemma:norm_e} 
implies that $e_\b$ is well defined.
The relations in $\Br(M)$ involving the elements $e_\b$
extend the relations already described for
fundamental elements $e_i$. 

\begin{lemma}\label{lemma:relations}
Let $\b$ and $\c$ be positive roots of $W$.
\begin{itemize}
\item[(i)] $e_\b  r_\b = r_\b e_\b  = e_\b $ and $e_\b^2=\delta e_\b$.
\item[(ii)] If $(\b,\c) = \pm1$ then 
\begin{enumerate}[(a)]
\item $e_\b  r_\c  e_\b  = e_\b $,
\item $r_\b r_\c  e_\b  = e_\c  r_\b r_\c  = e_\c e_\b $,
\item $e_\b  e_\c e_\b  = e_\b $.
\end{enumerate}
\item[(iii)] If $(\b,\c) = 0$, then $e_\b  r_\c = r_\c  e_\b $ and $e_\b e_\c  =e  _\c e_\b.$
\end{itemize}
\end{lemma}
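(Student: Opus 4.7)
The approach is to derive each relation in (i)--(iii) by conjugating a corresponding fundamental relation from Tables~\ref{table:BrauerRels} or~\ref{AddBrauerTable} by a suitable $w\in W$. The basic mechanism is that if $w\a_i=\pm\b$ then $r_\b=wr_iw^{-1}$ and $e_\b=we_iw^{-1}$: the sign-independence of $e_\b$ uses $r_ie_ir_i=e_i$ (obtained from (RSre) and (RSer)), so that replacing $w$ by $wr_i$ leaves $we_iw^{-1}$ unchanged.

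For (i), pick $w$ with $w\a_i=\b$; conjugating (RSer), (RSre) and (HSee) immediately gives $e_\b r_\b=r_\b e_\b=e_\b$ and $e_\b^2=\delta e_\b$.

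For (ii) and (iii) I appeal to two orbit facts for ADE root systems. First, all $A_2$-subsystems of $\Phi$ form a single $W$-orbit, so every pair of roots at inner product $-1$ is $W$-conjugate to an adjacent pair of simple roots $(\a_i,\a_j)$ with $i\sim j$; the inner product $+1$ case reduces to this after negating one root, at no cost because $r_\c$ and $e_\c$ are unchanged by $\c\mapsto-\c$. Taking such a $w$, (ii)(a) follows by conjugating (RNere), (ii)(b) by conjugating (RNrre) and (RNerr) (each with the roles of $i$ and $j$ swapped, so that $r_ir_je_i=e_je_i$ and $e_jr_ir_j=e_je_i$), and (ii)(c) by conjugating (HNeee). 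Second, every $W$-orbit of pairs of orthogonal positive roots contains a pair $(\a_i,\a_j)$ with $i\not\sim j$; taking such a $w$, both relations of (iii) reduce to (HCer) and (HCee) respectively.

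The main obstacle is this second orbit claim. In $A_n$, $\E_6$, $\E_7$ and $\E_8$ there is a single $W$-orbit of orthogonal pairs of roots (cf.\ the $|X|=2$ lines of Table~\ref{table:types2}), so the claim is immediate. In $\D_n$ there are two orbits, distinguished by the isomorphism type of the orthogonal complement ($\A_1^2\D_{n-4}$ versus $\D_{n-2}$): the first is represented by the simple pair $(\a_1,\a_3)$, while the ``coplanar'' second orbit, containing pairs such as $\{\eps_1-\eps_2,\eps_1+\eps_2\}$, is represented by the simple pair $(\a_{n-1},\a_n)=\{\eps_{n-1}-\eps_n,\eps_{n-1}+\eps_n\}$. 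The subtlety---that one cannot fix $\a_1$ and move $\eps_1+\eps_2$ to a simple root using only the stabilizer of $\a_1$, since the $\A_1$-factor of $\Phi_{\a_1}^\perp\cong \A_1\times\D_{n-2}$ is not generated by simple roots---is evaded by allowing $w$ to move both roots simultaneously.
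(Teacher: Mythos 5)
Your proposal is correct and follows essentially the same route as the paper: both reduce each relation to the corresponding relation among simple-root generators (from Tables \ref{table:BrauerRels} and \ref{AddBrauerTable}) by conjugating with a $w\in W$ carrying the pair $(\b,\c)$ to a pair of simple roots, using the well-definedness of $e_\b$ from Lemma \ref{lemma:norm_e}. The paper's own proof is a two-line sketch, and your additional care --- handling the $(\b,\c)=+1$ case by negating a root, and checking that both $W$-orbits of orthogonal pairs in $\D_n$ contain pairs of non-adjacent simple roots --- supplies exactly the transitivity details the paper leaves implicit.
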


\begin{proof}
If $\b$ and $\c$ are simple roots, this is direct from the defining
relations of $\BrM(M)$. Otherwise, there are $w\in W$ and nodes $i,j$ of $M$
such that
$w\a_i = \b$ and $w\a_j = \c$, and
the result follows from (\ref{def:e_t}) by conjugation.
\end{proof}

We next extend the definition of $e_\b$ to arbitrary sets of mutually orthogonal
positive roots. For such a set $B$, we define the element $e_B$ of $\Br(M)$ by
\begin{equation}\label{def:e_X}
e_B = \prod_{\b \in B} e_\b.
\end{equation}

This definition is unambiguous as $e_\b$ and $e_\c$ commute whenever $\b$ and
$\c$ are orthogonal (cf.~Lemma \ref{lemma:relations}(iii)). Clearly, $e_B$
behaves well under conjugation by $W$ in the sense that $ue_Bu^{-1} = e_{uB}$.

An important difference between $\Br(\A_n)$ and the Brauer algebras of other
types is the fact that the orbit of $B$ under the action of $W$ need not
correspond bijectively with the orbit of $e_B$ under $W$ by conjugation.  For
example, when $M = \D_4$, with the labeling of the nodes as in \cite{Bou}, the
set $B = \{\a_1,\a_3,\a_4\}$ is distinct from $wB$, where $w = r_2r_1r_3r_2$,
but $we_Bw^{-1} = e_B$.  For this reason, we need compare the action of $W$ on
$e_B$ with the conjugation action on its admissible closure $\cl{B}$ rather
than $B$.  The necessary transition from $B$ to $\cl{B}$ is expressed in the
next lemma.

\begin{lemma}\label{lemma:adm_idmptnts}
If $X$ is a set of mutually orthogonal positive roots of $W$, then
\[
e_{\cl{X}} =  e_X \delta^{|\cl{X}|-|X|}.
\]
\end{lemma}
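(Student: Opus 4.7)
The plan is to proceed by induction on $k = |\cl{X}| - |X|$. The base case $k = 0$ is immediate, since then $X$ is already admissibly closed and the claim reads $e_X = e_X$. For $k \ge 1$, I would pick, via Lemma \ref{lemma:unique_root}, a single root $\b_4 \in \cl{X}\setminus X$ arising from some triple $\b_1, \b_2, \b_3 \in X$ and a common non-orthogonal root $\c$, so that $\b_4 = \pm r_\c r_{\b_1}r_{\b_2}r_{\b_3}\c$. An application of Lemma \ref{lm:admclosure}(ii) (with $\{\b_1, \b_2, \b_3\}$ playing the role of $X$ there, and each remaining $\b \in X \setminus \{\b_1,\b_2,\b_3\}$ playing the role of $\c$) shows every such $\b$ is orthogonal to $\b_4$, the alternative $\b = \b_4$ being excluded since $\b_4 \notin X$. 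Hence $X' := X \cup \{\b_4\}$ is still a set of mutually orthogonal positive roots with $\cl{X'} = \cl{X}$, and the inductive hypothesis yields $e_{\cl{X}} = \delta^{k-1} e_{X'}$. It thus suffices to prove $e_{X'} = \delta\, e_X$.

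Since Lemma \ref{lemma:relations}(iii) makes $e_{\b_4}$ commute with every $e_\b$ for $\b \in X$, factoring out the commuting tail $e_{X \setminus \{\b_1,\b_2,\b_3\}}$ reduces the problem to the key identity
\[
e_{\b_1} e_{\b_2} e_{\b_3} e_{\b_4} \;=\; \delta\, e_{\b_1} e_{\b_2} e_{\b_3}.
\]
To prove this, I would use Lemma \ref{lemma:norm_e} to write $e_{\b_4} = w e_\c w^{-1}$ with $w = r_\c r_{\b_1} r_{\b_2} r_{\b_3}$, and systematically rewrite the left side using just three local moves from Lemma \ref{lemma:relations}: the commutation $e_\b e_{\b'} = e_{\b'} e_\b$ for orthogonal $\b, \b'$ from (iii); the identity $e_\b r_\c = e_\b e_\c r_\b$ for $\b \sim \c$ from (ii)(b); and the idempotent reduction $e_\b e_\c e_\b = e_\b$ from (ii)(c). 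The decisive collapse producing the factor $\delta$ is
\[
e_{\b_1} e_{\b_2} e_\c e_{\b_1} e_{\b_2} e_\c = e_{\b_2} (e_{\b_1} e_\c e_{\b_1}) e_{\b_2} e_\c = e_{\b_2} e_{\b_1} e_{\b_2} e_\c = \delta\, e_{\b_1} e_{\b_2} e_\c,
\]
obtained via commutation, (ii)(c), and $e_{\b_2}^2 = \delta e_{\b_2}$.

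The main obstacle will be orchestrating the full rewrite chain without losing track. Concretely, a first pass (expanding $e_{\b_4}$, using $e_\b r_\c r_\b = e_\b e_\c$ to convert each leading $e_{\b_i} r_\c$ into $e_{\b_i} e_\c r_{\b_i}$, and invoking the collapse above) should yield the intermediate
\[
e_{\b_1} e_{\b_2} e_{\b_3} e_{\b_4} \;=\; \delta\, e_{\b_1} e_{\b_2} e_{\b_3}\, e_\c\, r_{\b_3} r_{\b_2} r_{\b_1} r_\c,
\]
and a second pass, alternating (ii)(b) with commutations to peel off each trailing reflection against its partner $e_{\b_i}$ (and at the end absorbing the stray $e_\c$ via $e_{\b_i} e_\c e_{\b_i} = e_{\b_i}$), should reduce the right-hand tail to the identity. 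This mirrors the standard $D_4$ computation in which $\c$ plays the trivalent simple root and $\b_1, \b_2, \b_3$ its three leaves; since every relation invoked is local (involving only the five roots $\b_1, \b_2, \b_3, \b_4, \c$), the argument transfers verbatim from this template to any configuration arising from Lemma \ref{lemma:unique_root}.
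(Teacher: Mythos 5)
Your argument is correct, and I checked that both rewriting passes close up as you predict (for the second pass: absorbing $e_\c$ against $e_{\b_3}$ turns the tail into $e_{\b_1}e_{\b_2}e_{\b_3}\,r_\c r_{\b_2}r_{\b_1}r_\c$, and then writing the idempotent as $e_{\b_2}e_{\b_3}e_{\b_1}$ and peeling $r_\c$, $r_{\b_2}$, $r_{\b_1}$, $r_\c$ off against $e_{\b_1}$ and $e_{\b_2}$ kills the reflections). Your overall strategy is the same as the paper's: adjoin the roots of $\cl{X}\setminus X$ one at a time, each produced from a triple in the current set via Lemma \ref{lemma:unique_root}, and verify a local identity in the rank-four subsystem spanned by $\b_1,\b_2,\b_3,\c$ using the extended relations of Lemma \ref{lemma:relations}. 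The difference lies in how that local identity is proved. You expand $e_{\b_4}=we_\c w^{-1}$ with $w=r_\c r_{\b_1}r_{\b_2}r_{\b_3}$ and rewrite $e_{\b_1}e_{\b_2}e_{\b_3}e_{\b_4}$ from scratch in two passes, manufacturing the factor $\delta$ via your ``decisive collapse.'' The paper instead conjugates by the shorter element $v=r_\c r_{\b_2}r_{\b_3}r_\c$, which involves only two of the three roots: a five-step computation with Lemma \ref{lemma:relations} gives $v\,e_{\b_2}e_{\b_3}=e_{\b_2}e_{\b_3}$, hence $v\,e_X=e_X$; since $\b_4=\pm v\b_1$ with $\b_1\in X$, one then gets $e_{\b_4}e_X=ve_{\b_1}v^{-1}\cdot ve_X=ve_{\b_1}e_X=ve_X\delta=e_X\delta$ with no further rewriting --- the factor $\delta$ comes for free from $e_{\b_1}e_X=\delta e_X$. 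The paper's route is shorter, and its intermediate identity $v\,e_X=e_X$ is reused later in the proof of Proposition \ref{prop:AX=Kernel}(ii), so it earns its keep; your version is more self-contained but costs the longer computation. Two small points to make explicit in a final write-up: $w\c$ may equal $-\b_4$, which is harmless since $e_\b$ depends only on $\pm\b$; and when $\cl{X}\neq X$ the first step of the iterative closure construction already produces a root of $\cl{X}\setminus X$ arising directly from a triple in $X$, which is what guarantees a $\b_4$ of the required form exists for the induction.
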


\begin{proof}
Suppose that $\a \in \Phi^+\setminus X$ is non-orthogonal to
the pair
$\b_2$, $\b_3$ in $X$.  
By Lemma \ref{lemma:relations}, 
\begin{eqnarray*}
r_\a r_{\b_2} r_{\b_3} r_\a e_{{\b_2}} e_{{\b_3}} 
&=&   r_\a r_{\b_2} r_{\b_3} r_\a e_{{\b_3}} e_{{\b_2}} 
 =  r_\a r_{\b_2} e_\a e_{{\b_3}} e_{{\b_2}}
 =  e_{\b_2} e_\a e_{{\b_3}} e_{{\b_2}}  \\
&=  & e_{\b_2} e_\a e_{{\b_2}} e_{{\b_3}}  
 =  e_{{\b_2}} e_{{\b_3}},
\end{eqnarray*}
whence $r_\a r_{\b_2} r_{\b_3} r_\a e_X = e_X$.

Suppose now that $\b_1$ is a third root of $X$ that is not orthogonal to $\a$.
Let $\c$ be the unique positive root in $X$ non-orthogonal to $\a$ and
orthogonal to $\b_1$, $\b_2$, and $\b_3$, cf.~Lemma \ref{lemma:unique_root}.
Then $\c = r_\a r_{\b_2} r_{\b_3} r_\a \b_1$.  As $r_\a r_{\b_2} r_{\b_3} r_\a
r_{\b_1} =r_\c r_\a r_{\b_2} r_{\b_3} r_\a $,
using Lemma \ref{lemma:norm_e}  we find
$$
r_\a r_{\b_2} r_{\b_3} r_\a e_X \delta 
= r_\a r_{\b_2} r_{\b_3} r_\a e_{\b_1}e_X  
 =  e_\c r_\a r_{\b_2} r_{\b_3} r_\a e_X  
 =   e_\c   e_X 
 =  e_{\{\c\}\cup X} .
$$
This procedure can be repeated until we have reached $\cl{X}$.
The lemma follows.
\end{proof}

\begin{Prop}\label{prop:rep}
Let $X $ be an admissible set and let
$Y$ be a set
of mutually orthogonal positive roots (not necessarily admissible). Then
\[
e_Y \xi_X  \in  \xi_Z W(C) \delta^k \cup \{0\},
\]
for some $k \in \mathbb{N}$ with $k \le |Y|$ and $Z \in WX$ with $Y
\subseteq Z$.  Moreover, $e_Y\xi_X\ne0$ with $k = |Y|$ if and only if $Y
\subseteq X$, in which case $e_Y\xi_X= \xi_X \delta^{|Y|}$.
\end{Prop}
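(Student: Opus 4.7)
The plan is to induct on $|Y|$, after first extending the three-case action rule (\ref{def:rho_e}) for the simple generators $e_i$ to a rule for arbitrary $e_\b$. Specifically, for any positive root $\b$ and $X \in \B$, I would show
\[
e_\b \xi_X = \begin{cases} \xi_X \delta & \text{if } \b \in X, \\ 0 & \text{if } \b \perp X \text{ and } \b \notin X, \\ \xi_{r_\c r_\b X}\, h & \text{if some } \c \in X \text{ satisfies } \c \not\perp \b, \end{cases}
\]
for some $h \in W(C)$; in the third case $r_\c r_\b X \in WX$ contains $\b$. Writing $\b = w\a_i$, we have $e_\b = w e_i w^{-1}$ by (\ref{def:e_t}) and Lemma~\ref{lemma:norm_e}, and iterating (\ref{def:rho_i}) gives $w^{-1}\xi_X = \xi_{w^{-1}X}\,h_0$ for some $h_0\in W(C)$; the three cases then reduce to (\ref{def:rho_e}) applied to $e_i\xi_{w^{-1}X}$, using that $\delta$ is central.

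Next I would induct on $|Y|$. The base case $Y = \emptyset$ is immediate ($k=0$, $Z=X$). For the inductive step, pick $\b_1 \in Y$, set $Y' = Y\setminus\{\b_1\}$, and factor $e_Y = e_{\b_1} e_{Y'}$ (valid by Lemma~\ref{lemma:relations}(iii), since the roots in $Y$ are mutually orthogonal). By induction, either $e_{Y'}\xi_X = 0$ (in which case we are done) or $e_{Y'}\xi_X = \xi_{X'} u\, \delta^{k'}$ with $u\in W(C)$, $k'\le|Y'|$, $X'\in WX$, and $Y'\subseteq X'$. Apply the single-root rule to $e_{\b_1}\xi_{X'}$: case 1 ($\b_1\in X'$) gives $\xi_{X'}u\,\delta^{k'+1}$, with $Z:=X'$ and $k=k'+1\le|Y|$; case 2 gives $0$; case 3 gives $\xi_{X''}h'u\,\delta^{k'}$ with $X''=r_\c r_{\b_1}X'$ and $\b_1\in X''$.

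The main obstacle is verifying $Y\subseteq X''$ in case 3, which is what propagates the inclusion through the induction. Here every $\b\in Y'$ is orthogonal to $\b_1$ (since $Y$ is mutually orthogonal) and to $\c$ (since $\b,\c\in X'$ and $X'$ is mutually orthogonal), so $r_\c r_{\b_1}$ fixes $\b$, giving $\b\in r_\c r_{\b_1}X' = X''$. Combined with $\b_1\in X''$ this yields $Y\subseteq X''$, closing the induction.

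For the ``moreover'' clause: if $Y\subseteq X$, iterating case 1 of the single-root rule produces $e_Y\xi_X = \xi_X\delta^{|Y|}$ directly. Conversely, tracing through the induction, the exponent $k$ increases by $1$ only in case 1, is preserved in case 3, and collapses everything to $0$ in case 2; so $e_Y\xi_X\ne 0$ with $k=|Y|$ forces case 1 at every step, in which the admissible set does not change, so $X'$ stays equal to $X$ throughout and each $\b_j\in X$, that is, $Y\subseteq X$.
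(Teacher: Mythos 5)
Your proposal is correct and follows essentially the same route as the paper: induction on $|Y|$, peeling off one root $\b_1$ and applying the three-case action of $e_{\b_1}$ (reduced to a simple root by conjugation) to the set produced by the inductive hypothesis, with the key observation that $r_\c r_{\b_1}$ fixes $Y'$ pointwise so the inclusion $Y\subseteq Z$ propagates, and with the ``moreover'' clause obtained by noting the $\delta$-exponent increases only in the $\b_1\in X'$ case. The only cosmetic difference is that you package the base case as a single-root rule for arbitrary $e_\b$ and start the induction at $Y=\emptyset$, whereas the paper starts at $|Y|=1$.
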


\begin{proof}
The proof of the first assertion is by induction on the size of $Y$.

Suppose that $|Y| = 1$. There exists a positive root $\a$ such that $Y = \{ \a
\}$ and $e_Y = e_\a$.  If $\a$ is not a simple root, choose $w\in W$ for
which $w \a_i =\a $ where $\a_i$ is simple. Then $e_\a=we_iw^{-1}$ and
$w\xi_X=\xi_{wX}h$ with $h\in W(C)$.  The conditions on subsets are preserved.
Therefore, we may, and shall, assume that $\a$ is simple.
There are three cases to consider.

\nl $\a \in X$. Then $e_Y \xi_X = e_\a \xi_X = \xi_X \delta$. Now, for
$k = 1$ and $Z = X$ we have $Y \subseteq Z$ and $k = |Y|$, as required.

\nl $\a\perp X$. Then $e_Y \xi_X = e_\a \xi_X  = 0$ and the assertions
hold.

\nl $\a\sim\b\in X$. Then
$e_Y \xi_X 
 =  e_\a \xi_X  
 =  r_\b r_\a \xi_{X }
\in   \xi_{ r_\b r_\a  X }  W(C)$.
Moreover, $\a = r_\b r_\a\b \in r_\b r_\a X $, so the assertions hold with 
$Z = r_\b r_\a X $ and  $k = 0$.

Next, assume $|Y| > 1$.
Take $\a \in Y$ and set $Y_0 = Y \backslash \{ \a \}$. We compute
$e_Y \xi_X =  e_\a e_{Y_0}  \xi_X$.
If $e_{Y_0} \xi_X = 0$, then clearly
$e_Y \xi_X = 0$.
Assume therefore $e_{Y_0} \xi_X \ne 0$.
By the induction hypothesis,
$ e_{Y_0} \xi_X =  \xi_Z v \delta^k$ 
with $Y_0 \subseteq Z$, $v
\in W(C)$, and $k \leq |Y_0|$. Now
\begin{equation}\label{eqn:induction2}
e_Y \xi_X  =  e_\a \xi_Z v \delta^k .
\end{equation}
Put $Z_0 = Z \setminus Y_0 $. 
We have $\a\in Y_0^\perp$. Moreover, every element of $Z_0$ commutes
with every element of $Y_0$ . Again,
there are three cases to consider,

\nl $\a \in Z_0$. Then $e_Y \xi_X = e_\a \xi_Z v \delta^k = \xi_Z v \delta^{k
+ 1}$ and $Y = Y_0 \cup \{ \a \} \subseteq Z$. Furthermore, $k \leq |Y_0| =
|Y| - 1$, so $k + 1 \leq |Y|$. This proves the proposition in this case.

\nl $\a \perp Z$. 
Then $e_\a \xi_Z  = 0$, so $e_Y \xi_X =
 e_\a \xi_Z v \delta^k = 0$ by (\ref{eqn:induction2}).

\nl $\a\sim \b \in Z$.
Then
\begin{eqnarray*}
e_Y \xi_X 
& = &  e_\a \xi_Z v \delta^k 
 =   r_\b r_\a  \xi_{Z} v \delta^k.
\end{eqnarray*}
Now $r_\b r_\a Z = r_\b r_\a Z_0 \cup r_\b r_\a Y_0 $. As $\a,\b\perp Y_0$, we
haved $r_\b r_\a Y_0 = Y_0$. Hence $r_\b r_\a Z = r_\b r_\a Z_0 \cup Y_0$. As
before, $\a \in r_\b r_\a Z_0 $. Hence $Y = Y_0 \cup \{ \a \} \subseteq r_\b
r_\a Z$.  Furthermore, $k \leq |Y_0| < |Y|$, as required for the proof of the
first assertion.

In order to settle the second assertion, suppose that $k = |Y|$ and
$e_Y\xi_X\ne0$. If $Y=\emptyset$ the assertions $Y\subseteq X$ and $e_Y\xi_X=
\xi_X \delta^{|Y|}$ hold trivially.  Let $k>0$ and proceed by induction on
$k$.  Take $\b\in Y$ and set $Y'=Y\setminus\{\b\}$.  Clearly $e_{Y'}\xi_X\ne0$
and $k-1 = |Y'|$, so, by the induction hypothesis, $Y'\subseteq X$ and
$e_{Y'}\xi_X= \xi_X \delta^{k-1}$, whence $e_Y\xi_X = \e_\b\xi_X\delta^{k-1}$.
If $\b\perp X$, then $e_Y\xi_X = 0$, a contradiction.  If $\b\sim\c\in X$,
then $e_Y\xi_X = r_\c r_\b \xi_X \delta^{k-1}\in \xi_X W(C) \delta^{k-1}$
contradicting the assumption $e_Y\xi_X \in \xi_X W(C) \delta^k$, so we
must have $\b\in X$.  It follows that $Y = Y'\cup\{\b\}\subseteq X$ and
$e_Y\xi_X = \xi_X\delta^k$ as required for the only if part.  For the converse
use the case $|Y|=1$ above repeatedly.  This establishes the second assertion.
\end{proof}

\begin{Cor}\label{thm:rep}
Let $\B $ be an admissible $W$-orbit and $X, Y \in
\B$. 
Then
\[
e_Y \xi_X \in  \xi_Y W(C) \delta^k \cup \{ 0\},
\]
where $k \leq |Y|$. Moreover, if $k = |Y|$ and $e_Y\xi_X\ne0$, then $Y = X$
and $e_X\xi_X=\xi_X\delta^k$.
\end{Cor}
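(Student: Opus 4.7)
The plan is to derive the corollary directly from Proposition \ref{prop:rep} by exploiting the extra hypothesis that $Y$ lies in the $W$-orbit $\B$, which forces $|Y|=|X|$. Once this cardinality rigidity is in place, the improvement from ``$Y\subseteq Z$ for some $Z\in WX$'' to ``$Z=Y$'' is immediate, and the second half of the corollary becomes a book-keeping exercise.

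More concretely, I would first apply Proposition \ref{prop:rep} with the admissible set $X$ and the set of mutually orthogonal positive roots $Y$ (admissible, but that is not even used at this stage). This gives $e_Y\xi_X\in\xi_Z W(C)\delta^k\cup\{0\}$ with $k\leq|Y|$, some $Z\in WX=\B$, and $Y\subseteq Z$. Because $Y$ and $Z$ both belong to the single $W$-orbit $\B$, they have equal cardinality, say $|Y|=|Z|=|X|$. The containment $Y\subseteq Z$ combined with $|Y|=|Z|$ forces $Y=Z$, which yields the first assertion $e_Y\xi_X\in\xi_Y W(C)\delta^k\cup\{0\}$.

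For the ``moreover'' part, suppose $e_Y\xi_X\neq 0$ and $k=|Y|$. The second half of Proposition \ref{prop:rep} then gives $Y\subseteq X$, and once again the cardinality equality $|Y|=|X|$ (both are in $\B$) upgrades this to $Y=X$. The explicit formula $e_X\xi_X=\xi_X\delta^{|X|}$ is then just the last line of Proposition \ref{prop:rep} with $Y=X$.

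I do not expect a serious obstacle: the entire corollary is essentially a specialization of Proposition \ref{prop:rep} to the case when $Y$ happens to lie in the same $W$-orbit as $X$. The only observation of substance is that a $W$-orbit on admissible sets preserves the cardinality of those sets, which is built into the definition since $W$ acts by permutations of roots. Everything else is a direct citation of the proposition.
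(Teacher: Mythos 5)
Your proposal is correct and follows exactly the paper's own argument: both invoke Proposition \ref{prop:rep} to get $e_Y\xi_X\in\xi_Z W(C)\delta^k\cup\{0\}$ with $Y\subseteq Z\in WX$, then use $|Y|=|Z|$ (both in the orbit $\B$) to conclude $Y=Z$, and handle the ``moreover'' clause by the same cardinality argument applied to $Y\subseteq X$. Nothing further is needed.
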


\begin{proof}
Suppose that $e_Y\xi_X \neq 0$. By Proposition \ref{prop:rep} there are $Z \in
W\, X$, $w \in W(C)$, and $k \in \mathbb{N}$ such that $e_Y\xi_X = w \xi_Z
\delta^k$.  Moreover, $Y \subseteq Z$ and $k \leq |Y|$. Since $Z \in \B$ we
know that $|Y| = |X| = |Z|$. Thus $Y = Z$.

Suppose that $k = |Y|$. Then $Y \subseteq X$ by Proposition
\ref{prop:rep}. Since $|Y| = |X|$ we conclude $Y = X$.
\end{proof}

For $X$ a set of mutually orthogonal positive roots, define
the {\em annihilator} of $e_X$, denoted $A_X$, to be
\begin{equation}
A_X = \{ w \in W \mid w e_X = e_X \}.
\end{equation}
and the {\em centralizer} of $e_X$, denoted $N_X$, to be
\begin{equation}
N_X = \{ w \in W \mid e_X w = w e_X \}.
\end{equation}
In view of Lemmas \ref{lm:admclosure}(iv) and \ref{lemma:adm_idmptnts},
$N_W(X)\leq N_W(\cl{X})\leq N_X$.  Also, by Proposition \ref{prop:rep},
$A_X\leq A_{\cl{X}} \lhd N_X$. Some further properties of these subgroups are
listed in the next proposition, the second item of which we could only prove
by means of a case by case verification.

Before Lemma \ref{lm:Hgens} we introduced the notation $K_X$
for the kernel of the
restriction of $\rho_{\B}$ to $N_X$ on $\xi_X\Z[W(C),\delta^{\pm1}]$.

\begin{Prop}\label{prop:AX=Kernel}
Let $X$ be the highest element in its $W$-orbit and
put $C = C_{WX}$.
\begin{enumerate}[(i)]
\item $N_X = N_W(X)$.
\item The normal subgroup $A_X$ of $N_X$ coincides with $K_X$. It is
generated by $$\{r_\b,r_\a r_\b r_\c r_\a\mid
\a\in\Phi^+, \b,\c\in X , \b\sim\a\sim\c\}.$$
\item $N_X$ is the semi-direct product
of $A_X$ and $W(C)$. 
\end{enumerate}
\end{Prop}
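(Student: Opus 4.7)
The plan is to prove the three items in order. For (i), the inclusion $N_W(X)\leq N_X$ is observed just before the statement, so I need only prove $N_X\leq N_W(X)$. Given $w\in N_X$, I apply both sides of $we_X=e_Xw$ to $\xi_X$ in the representation $\rho_\B$ of Theorem \ref{prop:BrMonAction}(ii). Iterating (\ref{def:rho_i}) shows $v\xi_X\in\xi_{vX}W(C_\B)$ for every $v\in W$, so writing $w\xi_X=\xi_{wX}h_w$ with $h_w\in W(C_\B)$ and using $e_X\xi_X=\xi_X\delta^{|X|}$ (Corollary \ref{thm:rep}), the left hand side becomes $\xi_{wX}h_w\delta^{|X|}$ while the right hand side equals $e_X\xi_{wX}h_w$. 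By Corollary \ref{thm:rep} this latter element lies in $\xi_X W(C_\B)\delta^k$; since the $\xi_Y$, $Y\in\B$, form a free basis of $V_\B$ over $\Z[\delta^{\pm1}][W(C_\B)]$, comparing indexing elements forces $wX=X$, so $w\in N_W(X)$.

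For (ii), the containment $A_X\subseteq K_X$ is immediate: if $we_X=e_X$, then applying to $\xi_X$ and using $e_X\xi_X=\xi_X\delta^{|X|}$ yields $w\xi_X=\xi_X$. For the reverse, I will identify an explicit generating set $G$ for $K_X$, namely the one displayed in the proposition, and verify each generator lies in $A_X$. For $r_\b$ with $\b\in X$, Lemma \ref{lemma:relations}(i) applied inside the product (\ref{def:e_X}) gives $r_\b e_X=e_X$. For $r_\a r_\b r_\c r_\a$ with $\b\sim\a\sim\c$ and $\b,\c\in X$, the identity $r_\a r_\b r_\c r_\a e_X=e_X$ is exactly the first display in the proof of Lemma \ref{lemma:adm_idmptnts}. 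Lemma \ref{lm:Hgens} likewise places such products in $K_X$, and the $r_\b$, $\b\in X$, lie in $K_X$ directly from (\ref{def:rho_i}) since $\b\in X$ implies $r_\b$ fixes $X$ setwise and the corresponding factor $h_{X,\,\cdot}$ is trivial.

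The main obstacle is establishing $\langle G\rangle=K_X$. By Corollary \ref{cor:semi-direct}, $|K_X|=|N_W(X)|/|W(C_\B)|$, a quantity that Table \ref{table:types2} makes explicit. The argument then proceeds orbit-by-orbit through the rows of that table: the first kind of generator produces the elementary abelian subgroup $\langle r_\b\mid\b\in X\rangle$ of order $2^{|X|}$ inside $C_W(X)$, while the second kind supplies a complement, since a short root-system computation (using Lemma \ref{lemma:unique_root}) shows that $r_\a r_\b r_\c r_\a$ transposes $\b$ and $\c$ inside $X$ and fixes $X\setminus\{\b,\c\}$ pointwise, thereby realizing $N_W(X)/C_W(X)$ as given in the last column (a symmetric group, $W(\mathrm{B}_t)^*$, or one of the groups $\mathrm{L}(3,2)$, $2^3\mathrm{L}(3,2)$ in the exceptional types). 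In the families $\A_n$ and $\D_n$ the verification is uniform; for $\E_6$, $\E_7$, $\E_8$ it is a finite but intricate inspection of each row of Table \ref{table:types2}---this is the case-by-case verification the authors allude to. Matching $|\langle G\rangle|$ against $|N_W(X)|/|W(C_\B)|$ completes (ii).

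Finally, (iii) is immediate from (i) and (ii) combined with Corollary \ref{cor:semi-direct}: the corollary decomposes $N_W(X)=K_X\rtimes W(C_\B)$, and substituting $N_X$ for $N_W(X)$ by (i) and $A_X$ for $K_X$ by (ii) gives the required semi-direct product decomposition of $N_X$.
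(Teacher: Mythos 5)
Your parts (i) and (iii) are correct and essentially identical to the paper's argument. The genuine gap is in part (ii), in the step ``$\langle G\rangle=K_X$''. Your counting of $|\langle G\rangle|$ as (the order of) an elementary abelian group of order $2^{|X|}$ extended by $N_W(X)/C_W(X)$ does not match $|K_X|=|N_W(X)|/|W(C_\B)|$, because $W(X^\perp\cap\Phi)$ is in general strictly larger than $W(C_\B)$: compare the third and fourth columns of Table \ref{table:types2}. For $\D_n$ with $|X|=t$ one has $X^\perp\cap\Phi$ of type $\A_1^t\D_{n-2t}$ but $C_\B$ of type $\A_1\D_{n-2t}$, so $|K_X|=2^{2t-1}t!$ whereas your description yields only $2^tt!$; for $\E_7$ with $|X|=2$ the discrepancy is $64$ versus $8$. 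The missing elements of $K_X$ are (modulo $W(C_\B)$) the reflections $r_\gamma$ with $\gamma\perp X$ but $\gamma$ not a root of $C_\B$ --- the ``mates'' of the roots of $X$ --- and your sketch never shows these are captured by $\langle G\rangle$. The paper's case-by-case verification establishes exactly two facts that your argument omits: that the permutation action of $L_X=\langle G\rangle$ on $X$ coincides with that of all of $N_X$, \emph{and} that for every root $\gamma$ orthogonal to $X$ there is $u\in L_X$ with $ur_\gamma\in W(C_\B)$. Without the second fact the order count cannot close.

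A secondary error feeds into this: your claim that $r_\a r_\b r_\c r_\a$ transposes $\b,\c$ and fixes $X\setminus\{\b,\c\}$ pointwise is false whenever a third root $\eta\in X$ satisfies $\eta\sim\a$; in that case a direct computation (as in the proof of Lemma \ref{lm:admclosure}(v)) shows $\eta$ is sent to its admissible mate $\pm(\b+\c+\eta+2\a)$. This is consistent with --- indeed necessary for --- the last column of Table \ref{table:types2}, where for $\E_7$, $|X|=7$ the induced group is ${\rm L}(3,2)$, which contains no transpositions of $\Sym_7$; if your description of the generators' action were right, they would generate a full symmetric group on each orbit, contradicting the table. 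So the induced permutation group must itself be identified case by case, as the paper does, rather than read off as ``generated by transpositions.''
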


\begin{proof}
(i). 
Above, we observed that $N_W(X) \leq N_X$.
By Proposition \ref{prop:rep}, 
$we_X = e_X w$ for $w\in W$ implies $\xi_{wX}\in \xi_X W(C)\delta^\Z$.
Therefore, $N_X$ leaves invariant the 1-dimensional subspace
$\xi_X\Z[W(C),\delta^{\pm1}]$ of $V_\B$. This proves $N_X\leq N_W(X)$.

\nl(ii). 
If $w\in W$ satisfies $we_X = e_X$,
then there is $h\in W(C)$ such that
$\xi_{wX}h\delta^\Z  = w\xi_X\delta^\Z = we_X\xi_X \delta^\Z
= e_X\xi_X\delta^\Z= \xi_X\delta^\Z$.
But then $wX = X$, so $w\in  N_W(X) = N_X$ by (i), and $h = 1$.
This proves that $A_X$ is contained in $K_X$.

Let $L_X$ be the subgroup of $W$ with the generators specified in
the assertion. If $\b\in X$ then, by Lemma \ref{lemma:relations}, 
$r_\b e_X = r_\b e_\b e_X\delta^{-1}
=  e_\b e_X\delta^{-1} =   e_X$, so $r_b\in A_X$.
Let $\a\in\Phi^+$
and assume $\b$ and $\c$ in $X$ are as stated.
Then $r_\a r_\b r_\c r_\a\in A_X$ by the first paragraph of 
the proof of Lemma \ref{lemma:adm_idmptnts}.
Hence $L_X$ is contained in $A_X$. Now $L_X$ is a normal subgroup
of $N_W(X)$ contained in $K_X$,
so the product $L_XW(C)$ is a subgroup of $N_X$.
A case by case analysis shows that the action of $L_X$ induced on $X$
coincides with the action of $N_X$. Also, by inspection of
cases, for every root
of $\b\in\Phi$ orthogonal to $X$
there is an element $u\in L_X$ with $ur_\b\in W(C)$.
This implies that $L_X$ coincides with $K_X$ and hence with $A_X$.

\nl(iii).  By (i) and (ii), this is a restatement of Corollary
\ref{cor:semi-direct}.
\end{proof}

\begin{lemma}\label{lemma:action}
Let $X$ be a set of mutually orthogonal positive roots, $w \in W$, and $\b \in
\Phi^+$.
\begin{itemize}
\item[(i)] If $X \in  \AO$ and $w\in W$ is of minimal length in its coset $wN_X$, then
$w \xi_X  =  \xi_{wX}$.
\item[(ii)] The product $e_\b e_X$ 
can be expanded as follows.
\[
e_\b e_X   = \left\{
\begin{array}{ll}
 e_X \delta & \mbox{if $\b \in \cl{X}$,} \\
e_{X \cup \{ \b \}} & \mbox{if $\b \perp X$,} \\
r_\c r_\b e_{ X } & \mbox{where $\c \in X$, if $\b \sim \c\in X $}.
\end{array}
\right.
\]
\end{itemize}
\end{lemma}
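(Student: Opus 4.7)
The plan is to prove (i) by induction on the Coxeter length $\ell(w)$, using the formula $r_i\xi_B=\xi_{r_iB}h_{B,i}$ from (\ref{def:rho_i}), and to settle (ii) by a three-case analysis built on Lemmas \ref{lemma:relations} and \ref{lemma:adm_idmptnts}.

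For (i), the base case $\ell(w)=0$ is immediate. For the inductive step I would factor $w=r_iw'$ with $\ell(w')=\ell(w)-1$ and first verify that $w'$ is itself of minimal length in $w'N_X$: any strictly shorter $w''\in w'N_X$ would yield $r_iw''\in wN_X$ with $\ell(r_iw'')<\ell(w)$, contradicting minimality of $w$. By the induction hypothesis, $w'\xi_X=\xi_{w'X}$, so
\[
w\xi_X \;=\; r_i\,\xi_{w'X} \;=\; \xi_{r_iw'X}\,h_{w'X,i} \;=\; \xi_{wX}\,h_{w'X,i}.
\]
Since $h_{B,i}=1$ whenever $\a_i\not\perp B$, this correction factor is trivial unless $\a_i\perp w'X$. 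In that exceptional case $r_i$ fixes $w'X$ setwise, so $w'^{-1}r_iw'\in N_W(X)\le N_X$, and hence $wN_X=w'N_X$, contradicting the minimality of $w$ in its coset. Therefore $h_{w'X,i}=1$ and $w\xi_X=\xi_{wX}$.

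For (ii), the three cases are mutually exclusive and exhaustive: a positive root $\b$ either lies in $\cl{X}$ (in which case it is orthogonal to $X$), or it is orthogonal to $X$ without lying in $\cl{X}$, or there exists some $\c\in X$ with $\b\sim\c$. If $\b\in\cl{X}$, then writing $e_{\cl{X}}=e_\b\prod_{\c\in\cl{X}\setminus\{\b\}}e_\c$ with pairwise commuting factors (Lemma \ref{lemma:relations}(iii)) and applying $e_\b^2=\delta e_\b$ gives $e_\b e_{\cl{X}}=\delta e_{\cl{X}}$; Lemma \ref{lemma:adm_idmptnts} then lets me replace $e_{\cl{X}}$ by $e_X\delta^{|\cl{X}|-|X|}$ on both sides, and cancelling the invertible scalar $\delta^{|\cl{X}|-|X|}$ yields $e_\b e_X=\delta e_X$. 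If $\b\perp X$, then Lemma \ref{lemma:relations}(iii) gives pairwise commutativity of $e_\b$ with each $e_\c$ in $e_X$, and the definition (\ref{def:e_X}) produces $e_\b e_X=e_{X\cup\{\b\}}$. If $\b\sim\c\in X$, the key rewrite is $e_\b e_\c=r_\c r_\b e_\c$, obtained from Lemma \ref{lemma:relations}(ii)(b) by interchanging the roles of $\b$ and $\c$; factoring $e_X=e_\c e_{X\setminus\{\c\}}$ and right-multiplying the rewrite by $e_{X\setminus\{\c\}}$ then gives $e_\b e_X=r_\c r_\b e_X$, independence of the chosen $\c$ being automatic since the left-hand side depends only on $\b$ and $X$.

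The main subtlety lies in part (i): the decisive input is that a nontrivial correction $h_{w'X,i}$ would force $r_i$ into the set-stabilizer of $w'X$, collapsing $wN_X$ onto $w'N_X$ and violating the minimality of $w$. Once this is in place, part (ii) reduces to direct rewriting using the relations of Lemmas \ref{lemma:relations} and \ref{lemma:adm_idmptnts}.
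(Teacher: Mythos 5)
Your proof is correct and follows essentially the same route as the paper: part (i) is the paper's one-line induction along a reduced expression (with the observation, left implicit there, that a trivial action of $r_i$ on $w'X$ would contradict coset minimality) spelled out in full, and part (ii) is the same three-case reduction to Lemma \ref{lemma:adm_idmptnts}, the definition of $e_{X\cup\{\b\}}$, and Lemma \ref{lemma:relations}(ii)(b).
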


\begin{proof}
(i).
In a minimal expression $s_1\cdots s_q$ 
of $w$ as a product of simple reflections, each
$s_i$ will move $s_{i+1}\cdots s_q X$.
Then $s_i\xi_{s_{i+1}\cdots s_q X} = \xi_{s_{i}\cdots s_q X}$.

\nl(ii).  If $\b \in \cl{X}$, the result follows from Lemma
\ref{lemma:adm_idmptnts}.  If $\b \perp X$, the assertion is immediate from
the definition of $e_{\{\b\}\cup X}$.  Finally, suppose that there is some $\c
\in X$ with $\b\sim\c$. Then the assertion follows from Lemma
\ref{lemma:relations}(ii)(b).
\end{proof}

Let $\AO_0$ be the set of highest elements from the $W$-orbits in $\AO$.  For
$X\in \AO_0$, let $D_X$ be a set of right coset representatives for $N_X =
N_W(X)$ in $W$. 
By convention if $X=\emptyset$ we take $e_\emptyset$ to be the 
identity, $N_{W\emptyset}$ to be $W$, and $C_{W\emptyset}$ also to be $W$.  

\begin{Prop}\label{prop:monomials}
Each element of the Brauer monoid $\BrM(M)$ can be written in the form
$u e_X zv \delta^k $, where $X\in \AO_0$, 
$u,v^{-1} \in D_X$, $z \in W(C_{WX})$, and $k \in \Z$.
\end{Prop}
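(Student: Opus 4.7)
The plan is induction on the length of a word in the generators $r_i$, $e_i$, $\delta^{\pm 1}$ representing a given element of $\BrM(M)$. The identity admits a trivial presentation (with $X = \emptyset$, by the convention preceding the statement), so it suffices to show that the set of elements expressible in standard form is closed under right-multiplication by each generator. The case $g = \delta^{\pm 1}$ is immediate, being absorbed into the exponent $k$.

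For multiplication by $r_j$, starting from $u e_X z v \delta^k$, I would decompose $v r_j$ via the right-coset partition of $W$ by $N_X$ as $v r_j = m v'$ with $m \in N_X$ and $v'^{-1} \in D_X$. By Proposition~\ref{prop:AX=Kernel}(iii), $m$ factors uniquely as $a' z'$ with $a' \in A_X$ and $z' \in W(C_{WX})$. Conjugating $a'$ across $z$ using the normality of $A_X$ in $N_X$, and invoking the fact that $A_X$ absorbs $e_X$ on both sides (which follows from the explicit generators of $A_X$ listed in Proposition~\ref{prop:AX=Kernel}(ii) together with Lemma~\ref{lemma:relations}), the expression collapses to $u e_X z'' v' \delta^k$ for some $z'' \in W(C_{WX})$, which is in standard form.

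The crux is right-multiplication by $e_j$. Starting from $u e_X z v e_j \delta^k$, I push $e_j$ leftward through $v$ and then $z$ via the conjugation identity $w e_i w^{-1} = e_{w \a_i}$ (Lemma~\ref{lemma:norm_e}, extended through~\eqref{def:e_t}), reaching $u e_X e_\gamma z v \delta^k$ for an appropriate positive root $\gamma$. The right-hand analogue of Lemma~\ref{lemma:action}(ii), obtained by swapping $e_\c e_\gamma = e_\c r_\gamma r_\c$ via Lemma~\ref{lemma:relations}(ii)(b), then reduces $e_X e_\gamma$ to one of $e_X \delta$, $e_{X \cup \{\gamma\}}$, or $e_X r_\b r_\gamma$, according to whether $\gamma \in \cl{X}$, $\gamma \perp X$, or $\gamma \sim \b \in X$. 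The first and third outcomes reduce back to the already-handled $r_j$ case. In the second case the admissible set enlarges; I then select $w \in W$ with $w Y_0 = \cl{X \cup \{\gamma\}}$ for the highest element $Y_0 \in \AO_0$ of the new orbit, use Lemma~\ref{lemma:adm_idmptnts} (to pass between $X \cup \{\gamma\}$ and its closure at the cost of a power of $\delta$) together with $e_{\cl{X \cup \{\gamma\}}} = w e_{Y_0} w^{-1}$, and finally reprocess $uw$ and $w^{-1} z v$ via the coset decompositions relative to $N_{Y_0}$ exactly as in the $r_j$ case.

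The principal obstacle is this admissible-set transition $X \to Y_0$ in the $e_j$ case: one must verify that the resulting $u'$, $v'$, and $z'$ simultaneously satisfy $u' \in D_{Y_0}$, $v'^{-1} \in D_{Y_0}$, and $z' \in W(C_{WY_0})$, which requires iterated use of the semi-direct decomposition $N_{Y_0} = A_{Y_0} \rtimes W(C_{WY_0})$ and the absorption property of $A_{Y_0}$. A uniform argument across all $M \in \ADE$ will be supported by the representation $\rho_{\B}$ of Theorem~\ref{prop:BrMonAction}(ii), which serves as a consistency check on the rewriting and avoids case-by-case verifications for each Coxeter type.
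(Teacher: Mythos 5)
Your proof is correct and follows essentially the same route as the paper: collapse all occurrences of the $e$-generators into a single $e_X$ flanked by group elements via Lemma~\ref{lemma:action}(ii) (and its mirror image), Lemma~\ref{lemma:norm_e}, and Lemma~\ref{lemma:adm_idmptnts}, then normalize using $N_X = A_X \rtimes W(C_{WX})$ together with the two-sided absorption of $A_X$ by $e_X$. The only difference is organizational: the paper first reduces an arbitrary word to $w_1 e_X w_2^{-1}\delta^k$ and applies the coset decomposition just once at the very end, which sidesteps the repeated re-normalization you identify as the principal obstacle in the $\gamma\perp X$ transition.
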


\begin{proof}
By Lemma \ref{lemma:action}(ii), any expression of the form $e_\b w e_{X'}$
with $\b\in\Phi^+$, $w\in W$ and $X'$ a set of mutually orthogonal positive
roots, can be rewritten in the form $ve_Y\delta ^k$ with $v\in W$, $Y\in\AO$
and $k\in \Z$.  Consequently, up to a power of $\delta$, every element of
$\BrM(M)$ is equal to $w_1e_Xw_2^{-1}$ for some $X\in\A_0$ and $w_1,w_2\in W$.
Now, using Proposition \ref{prop:AX=Kernel}(iii), write $w_1 = u y_1 z_1$ and
$w_2 = v y_2z_2$ with $u,v\in D_X$, $y_1,y_2\in A_X$, and $z_1,z_2\in W(C)$.
Then $w_1e_Xw_2^{-1} = uy_1e_Xz_1z_2^{-1}y_2^{-1} v^{-1} =
ue_Xz_1z_2^{-1}y_2^{-1} v^{-1} = uz_1z_2^{-1}e_Xy_2^{-1} v^{-1} =
uz_1z_2^{-1}e_X v^{-1} = ue_X z_1z_2^{-1} v^{-1}$.  Taking $z = z_1z_2^{-1}$,
we find an expression as required.
\end{proof}

\nl {\bf Proof of Proposition \ref{prop:upb}}.  The dimension of $\Br(M)$ is
equal to the size of the quotient monoid $\BrM(M)\la \delta^{\pm1}\ra$, which,
by Proposition \ref{prop:monomials}, is at most
$\sum_{X\in\AO_0}|D_X|^2\cdot|W(C_{WX})|$. 
The proposition follows as $|D_X| =|WX|$.

\begin{remark}\label{rmk:triple1}
\rm
To finish this section, we describe the usual Brauer diagram on $n$ strands
corresponding to $ue_Xzv\delta^k$ for $k\in\N$ when $M=\A_{n-1}$.  It contains
$k$ circles. The horizontal strands at the top are determined by $uX$ in the
following way: each root in $uX$ is of the form $\eps_i-\eps_j$ in the
standard representation of $\Phi^+$, where each $\eps_t$ denotes the $t$-th
standard basis vector of $\R^n$; in the diagram there is a corresponding
horizontal strand from $i$ to $j$.  The bottom of the diagram is obtained by
the same interpretation of $v^{-1}X$. Finally the element $z$ determines the
vertical strands in terms of a permutation on the remaining nodes up to a
translation from the highest root to $X$. See Remark \ref{rmk:triple2} below
on how to obtain it.
\end{remark}

\section{Irreducibility of representations and lower bounding the dimension}
\label{sec:lb}
Corollary~\ref{thm:rep} allows us to find irreducible representations of
the Brauer algebra.  In fact one for each pair of a $W$-orbit $\B$ inside
$\AO$ and an irreducible representation of $W(C_\B)$.  This will enable us to
find a lower bound for the dimension of the Brauer algebra, which together
with Proposition \ref{prop:upb} gives the exact dimension.  Fix a $W$-orbit
$\B$ inside $\AO$ and recall the notation $\rho_\B$ from Theorem
\ref{prop:BrMonAction}(ii). We shall often abbreviate $V_\B$ and $C_\B$ to $V$
and $C$, respectively, where $V_\B$ was defined just above Lemma~\ref{lm:rep}.

\begin{Prop}\label{prop:eBvnotzero}  
Suppose that $v = \sum \xi_Bw \lambda_{B,w}$ is a nonzero element of $V$ where
the sum is over all $w\in W(C)$ and over all $B\in\B$.  Then there is some
$Y\in \B$ for which $e_Yv\neq 0$.
\end{Prop}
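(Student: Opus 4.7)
The plan is to exploit the powers of $\delta$ tracked by Corollary \ref{thm:rep}. Write $v = \sum_{B \in \B} \xi_B \mu_B$ with $\mu_B \in \Z[\delta^{\pm1}][W(C)]$, by gathering the terms $\xi_B w \lambda_{B,w}$ over $w$ for each $B$; then $v \neq 0$ means $\mu_B \neq 0$ for some $B$. Since $V$ is a free right $\Z[\delta^{\pm1}][W(C)]$-module on the $\xi_B$, it suffices to produce a $Y$ such that, after applying $e_Y$, the coefficient of $\xi_Y$ in $e_Y v$ is nonzero in $\Z[\delta^{\pm1}][W(C)]$.

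For each $B \in \B$ with $\mu_B \neq 0$, view $\mu_B$ as a Laurent polynomial in $\delta$ with coefficients in $\Z[W(C)]$ and let $d_B$ denote its top $\delta$-degree (so the leading coefficient of $\mu_B$ is a nonzero element of $\Z[W(C)]$). Choose $Y$ among the $B$ with $\mu_B \neq 0$ so that $d_Y$ is maximal; set $d = d_Y$. Now apply $e_Y$ term by term. By Corollary \ref{thm:rep}, $e_Y \xi_Y = \xi_Y \delta^{|Y|}$, while for each $B \neq Y$ with $e_Y \xi_B \neq 0$ we have $e_Y \xi_B = \xi_Y h_B \delta^{k_B}$ with $h_B \in W(C)$ and $k_B < |Y|$ (the strict inequality is the crucial statement of that corollary, using $|B| = |Y|$ within a $W$-orbit).

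Collecting the contributions gives
\[
e_Y v \;=\; \xi_Y \Bigl(\delta^{|Y|}\mu_Y \;+\; \sum_{B \neq Y,\; e_Y \xi_B \neq 0} h_B\, \delta^{k_B}\, \mu_B\Bigr).
\]
The first summand has $\delta$-degree $d + |Y|$, with leading coefficient equal to that of $\mu_Y$, which is nonzero by the choice of $Y$. Every other summand has $\delta$-degree $d_B + k_B \leq d + k_B < d + |Y|$, so cannot cancel the leading term. Hence the bracketed expression is nonzero in $\Z[\delta^{\pm1}][W(C)]$, and since $\{\xi_B\}_{B \in \B}$ is a free module basis, $e_Y v \neq 0$, as desired.

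The only step that is not routine is the strict inequality $k_B < |Y|$ for $B \neq Y$, and that is precisely the content of the last sentence of Corollary \ref{thm:rep}; the rest of the argument is a clean degree count.
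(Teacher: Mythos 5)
Your argument is correct, and it rests on exactly the same key input as the paper's proof, namely the strict inequality in Corollary \ref{thm:rep}: the diagonal terms $e_Y\xi_Y$ carry $\delta^{|Y|}$ while any nonzero off-diagonal $e_Y\xi_B$ carries a strictly smaller power of $\delta$. The implementation differs, though. The paper assembles all the coefficients $T_{Y,u;B,w}$ into one square matrix $T$ over $\Q(\delta)$ indexed by $\B\times W(C)$, observes that the diagonal entries have the maximal $\delta$-exponent $|B|$ and the off-diagonal ones lower exponents, concludes $\det(T)\ne 0$, and hence that $T\lambda=0$ forces $\lambda=0$; this shows only that the $e_Yv$ cannot all vanish simultaneously. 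You instead single out a specific $Y$ (the one whose coefficient $\mu_Y$ has maximal top $\delta$-degree) and extract the leading term of $e_Yv$ directly, so your version is more elementary (no determinant) and more constructive (it exhibits a witness $Y$), at the cost of not recording the slightly stronger fact, implicit in the paper's argument, that the map $\lambda\mapsto T\lambda$ is invertible over $\Q(\delta)$. One small caveat: as stated the proposition takes $v\in V$, so your degree bookkeeping over $\Z[\delta^{\pm1}][W(C)]$ is legitimate; when the result is later applied to vectors with $\Q(\delta)$-coefficients one should first clear denominators, but that affects the paper's formulation as much as yours.
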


\begin{proof}
Suppose that $e_Yv= 0$ for all $Y\in \B$.  By Proposition
\ref{prop:rep} there are coefficients
$T_{Y,u;B,w}\in\{0\}\cup \delta^\Z$, where $Y,B\in\B$
and $u,w\in W(C)$, such that 
$$e_Y\xi_B w = \sum_{u\in W(C)} \xi_Y u T_{Y,u;B,w}.$$
After an ordering of $\B\times W(C)$, the coefficients
$T_{Y,u;B,w}$ can be considered entries of a square matrix, $T$, over
$\Q(\delta)$ whose rows and columns are both indexed by the pairs
in $\B\times W(C)$.

Let $\lambda $ be the column vector with entries 
$\lambda_{B,w}$ indexed in the same order as used for $T$.
Now
\[
0 = e_Yv  =
\sum e_Y \xi_Bw \lambda_{B,w}
=
\sum_{u\in W(C)} \sum_{B,w} \xi_Y u T_{Y,u;B,w}   \lambda_{B,w}
= \sum_{u\in W(C)}  \xi_Y u (T\lambda)_{Y,u}.
\]
As this equality holds for all $(Y,u) \in\B\times W$, we find $T \lambda =
0$.  By Corollary~\ref{thm:rep}, the exponent of $\delta$ in an entry
$T_{Y,u;B,w}$ of $T$ is $|B|$ on the diagonal as $e_B\xi_Bw=\delta^{|B|}w$,
whereas, at nonzero off-diagonal entries, only lower powers of $\delta$ occur.
Consequently, $\det(T)$ is a nonzero element of $\Q[\delta^{\pm1}]$.  This
means that $T$ is nonsingular over the field $\Q(\delta)$, and so $T\lambda =
0$ implies $\lambda = 0$, that is, $v = 0$, a contradiction.  Hence the
proposition.
\end{proof}

\begin{Prop}\label{prop:invariantsubspace}  Suppose that $U$ is the regular 
representation of $W(C)$ over $\Q(\delta)$ and $U_1$ is an invariant subspace
of $U$ for $W(C)$.  Then $\sum_{B\in \B} \xi_BU_1$ is an invariant subspace of
$V\otimes_{\Z[\delta^{\pm1}]}
\Q(\delta)$ for $\Br(M)$.
\end{Prop}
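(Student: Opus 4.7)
The plan is to verify that each algebra generator of $\Br(M)$ (namely $\delta^{\pm 1}$, $r_i$, and $e_i$ for each node $i$ of $M$) sends $\sum_{B\in\B}\xi_B U_1$ into itself. The scalars $\delta^{\pm 1}$ act as central elements of $\Q(\delta)$ and obviously preserve any $\Q(\delta)$-subspace of $V\otimes_{\Z[\delta^{\pm 1}]}\Q(\delta)$, so the work reduces to handling the $r_i$ and the $e_i$.

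For the simple reflections $r_i$, the formula (\ref{def:rho_i}) shows $r_i\xi_B=\xi_{r_iB}h_{B,i}$ with $h_{B,i}\in W(C)$. Therefore $r_i\cdot(\xi_B u)=\xi_{r_iB}(h_{B,i}u)$ for $u\in U_1$, and since $U_1$ is $W(C)$-invariant the vector $h_{B,i}u$ again lies in $U_1$. Iterating this along a reduced expression, any $w\in W$ satisfies $w\xi_B\in\xi_{wB}W(C)$, so in particular $w\cdot(\xi_B u)\in\xi_{wB}U_1$. I will record this observation explicitly, as it is exactly what is needed to handle the more delicate $e_i$ case.

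For $e_i$, I would go case by case in (\ref{def:rho_e}). If $\a_i\perp B$ then $e_i\xi_B=0$, which is trivially in the subspace. If $\a_i\in B$ then $e_i(\xi_B u)=\delta\,\xi_B u\in\xi_B U_1$, again stable. The only case requiring the preliminary observation is $\a_i\sim\b$ for some $\b\in B$: there $e_i\xi_B=r_\b r_i\xi_B$, so $e_i(\xi_B u)=r_\b r_i(\xi_B u)$, and by the iteration noted above this equals $\xi_{r_\b r_i B}(gu)$ for some $g\in W(C)$, which lies in $\xi_{r_\b r_i B}U_1\subseteq\sum_{B'\in\B}\xi_{B'}U_1$.

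I do not expect any real obstacle; the argument really only uses the structural statement that each generator of $\Br(M)$ acts on a given basis vector $\xi_B$ by sending it (up to a scalar in $\Q(\delta)$) to $\xi_{B'}g$ for some $B'\in\B$ and $g\in W(C)$, together with the $W(C)$-invariance of $U_1$. The mildest subtlety is making sure that in the mixed case $\a_i\sim\b\in B$ the element $r_\b r_i$ is treated via the $W$-representation already established in Lemma \ref{lm:rep} (so that we genuinely get a single element of $W(C)$ on the right), rather than being reinterpreted through (\ref{def:rho_e}); once that is noted, the verification is uniform.
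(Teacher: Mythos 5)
Your proof is correct and follows essentially the same route as the paper's (which is a two-line version of the same observation): every generator sends $\xi_B u$ to $0$, to $\delta\,\xi_B u$, or to $\xi_{B'}(gu)$ with $B'\in\B$ and $g\in W(C)$, and $W(C)$-invariance of $U_1$ does the rest. Your explicit case analysis of (\ref{def:rho_e}) and the remark that $r_\b r_i$ must be processed through the $W$-action of Lemma \ref{lm:rep} are just more detailed renderings of what the paper leaves implicit.
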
  

\begin{proof}  This follows from the actions of $r_i$ and $e_i$ on 
$\xi_Bu$ for $u\in U$.  In each case the result is of the form $0$ or
$\xi_Ywu$ with $w\in W(C_\B)$, and so if $u\in U_1$ then so is $wu_1$.
\end{proof}

\begin{Prop}\label{prop:irreducible}  
If $U_1$ is an irreducible invariant subspace of the $\Q(\delta)[W(C)]$-module
$U$, then the representation given
by Proposition \ref{prop:invariantsubspace} gives an irreducible
representation of $\Br(M)$ over $\Q(\delta)$.
\end{Prop}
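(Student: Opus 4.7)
The plan is to show that any nonzero $\Br(M)$-submodule $V'$ of $V_1 := \sum_{B \in \B} \xi_B U_1$ must equal $V_1$. Pick any $0 \ne v \in V'$ and write $v = \sum_{B \in \B} \xi_B u_B$ with $u_B \in U_1$. By Proposition~\ref{prop:eBvnotzero}, there is some $Y \in \B$ with $e_Y v \ne 0$. By Corollary~\ref{thm:rep}, each $e_Y \xi_B$ lies in $\xi_Y W(C) \delta^{\Z} \cup \{0\}$, and since $U_1$ is stable under the $W(C)$-action and under scalar multiplication by $\delta^{\pm 1}$, we conclude that $e_Y v = \xi_Y u$ for some $u \in U_1 \setminus \{0\}$.

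Next I would transport this vector to the highest element $B_0$ of $\B$. Choose $w \in W$ of minimal length in its coset $w N_Y$ with $wY = B_0$; by Lemma~\ref{lemma:action}(i) we have $w \xi_Y = \xi_{B_0}$, and hence $w \, e_Y v = \xi_{B_0} u \in V'$. Because $\alpha_i \perp B_0$ and $h_{B_0,i} = r_i$ for each $i \in C = C_\B$, the definition (\ref{def:rho_i}) of $\rho_\B$ yields $r_i \xi_{B_0} = \xi_{B_0} r_i$, so the left action of $W(C)$ on $\xi_{B_0}$ matches the right regular action of $W(C)$ on itself. Since $U_1$ is irreducible as a $W(C)$-module and $u \ne 0$, acting by $W(C)$ from the left then sweeps out all of $\xi_{B_0} U_1$, giving $\xi_{B_0} U_1 \subseteq V'$.

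Finally I would spread this across the whole $W$-orbit. For each $B \in \B$ choose $w_B \in W$ of minimal length in its coset $w_B N_{B_0}$ with $w_B B_0 = B$; Lemma~\ref{lemma:action}(i) gives $w_B \xi_{B_0} = \xi_B$, and since the left $W$-action on $V$ is right $W(C)$-linear by construction, $w_B(\xi_{B_0} u') = \xi_B u'$ for every $u' \in U_1$. Hence $\xi_B U_1 \subseteq V'$ for each $B \in \B$, so $V' = V_1$ as required.

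The main obstacle is the first reduction: establishing that for some $Y$ we have $e_Y v$ a nonzero element of $\xi_Y U_1$ (and not merely of the larger space $\xi_Y U$). This relies on Proposition~\ref{prop:eBvnotzero} together with a careful application of Corollary~\ref{thm:rep} and the $W(C)$- and $\delta^{\pm1}$-stability of $U_1$. After that the argument is a clean two-step transport: first move $Y$ to the highest element $B_0$ (where the left $W(C)$-action on $\xi_{B_0}$ is regular, so irreducibility of $U_1$ immediately produces $\xi_{B_0} U_1 \subseteq V'$), and then propagate this to every $B \in \B$ via Lemma~\ref{lemma:action}(i).
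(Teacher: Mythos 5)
Your argument is correct and follows essentially the same route as the paper: use Proposition~\ref{prop:eBvnotzero} and Corollary~\ref{thm:rep} to land a nonzero vector in a single $\xi_Y U_1$, then exploit irreducibility of $U_1$ under $W(C)$ and transitivity of $W$ on $\B$. Your extra step of first transporting to the highest element $B_0$, where $r_i\xi_{B_0}=\xi_{B_0}r_i$ for $i\in C$, is a careful justification of the paper's briefer claim that acting by $W(C_\B)$ recovers all of $\xi_B U_1$.
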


\begin{proof}  
Let $v$ be a nonzero vector in $V_1=\sum_{B\in \B} \xi_B U_1$.  We know from
Proposition~\ref{prop:eBvnotzero} that there is a $B\in\B$ for which $e_Bv\neq
0$.  Suppose that $v$ is a nonzero element of an invariant subspace of $V_1$.
Then this subspace also contains $e_Bv$, which, by Corollary \ref{thm:rep},
is equal to $\xi_B u_1$ for some nonzero $u_1\in U_1$.  
As the representation is
irreducible, by acting by $W(C_\B)$ we can obtain all of $\xi_BU_1$ in the
invariant subspace.  As $W$ is transitive on $\xi_Y$ for $Y\in \B$,
the invariant subspace contains $\sum_{B\in\B} \xi_BU_1$ 
and so coincides with $V_1$. Therefore, the representation is irreducible.
\end{proof}

\begin{Prop}\label{prop:different}   
The irreducible representations obtained in 
Proposition~\ref{prop:irreducible} are not equivalent.  
\end{Prop}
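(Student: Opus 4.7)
The plan is to recover both the orbit $\B$ and the isomorphism type of $U_1$ as intrinsic invariants of the representation, by testing the action of the elements $e_Y$ from Section~\ref{sec:upb} and the idempotents built from them.

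To distinguish representations coming from different orbits $\B\ne\B'$, I would take the highest element $B_0$ of whichever orbit has larger elements, say $|B_0|\ge|B_0'|$ with $B_0'\in\B'$, and compare the action of $e_{B_0}$ on the two representations. By Corollary~\ref{thm:rep}, $e_{B_0}(\xi_{B_0}u)=\xi_{B_0}u\,\delta^{|B_0|}$ for any $u\in U_1$, so $e_{B_0}$ acts nontrivially on the $(\B,U_1)$-representation. On the $(\B',U_1')$-representation, Proposition~\ref{prop:rep} forces any nonzero value of $e_{B_0}\xi_X$ (with $X\in\B'$) to lie in $\xi_Z W(C_{\B'})\delta^{\ast}$ for some $Z\in\B'$ containing $B_0$; but $|B_0|\le|Z|=|B_0'|$ combined with $|B_0|\ge|B_0'|$ forces $B_0=Z\in\B\cap\B'=\emptyset$, a contradiction. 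Hence $e_{B_0}$ annihilates the second representation and the two cannot be equivalent.

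Assuming now $\B=\B'$, I would pass to $\Br(M)\otimes\Q(\delta)$ and use the idempotent $p=\delta^{-|B_0|}e_{B_0}$, which is idempotent because the commuting $e_\beta$ for $\beta\in B_0$ satisfy $e_\beta^2=\delta e_\beta$, so $e_{B_0}^2=\delta^{|B_0|}e_{B_0}$. Corollary~\ref{thm:rep} shows that $e_{B_0}$ carries every $\xi_X$ into $\xi_{B_0}U$ while acting as multiplication by $\delta^{|B_0|}$ on $\xi_{B_0}U_1$, so $p$ projects $V_1$ onto $\xi_{B_0}U_1$ and $V_1'$ onto $\xi_{B_0}U_1'$. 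To identify these images as $W(C_\B)$-modules, I would invoke $\alpha_i\perp B_0$ for $i\in C_\B$, whence $r_iB_0=B_0$ and $h_{B_0,i}=r_i$, yielding $w\xi_{B_0}u=\xi_{B_0}(wu)$ for all $w\in W(C_\B)$; thus $\xi_{B_0}U_1\cong U_1$ and $\xi_{B_0}U_1'\cong U_1'$ as $W(C_\B)$-modules. Any equivalence of $\Br(M)$-representations commutes with both $p$ and $W(C_\B)$, hence restricts to a $W(C_\B)$-isomorphism between the images, forcing $U_1\cong U_1'$.

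The most delicate step will be the idempotent argument in the second paragraph: showing that $p$ cleanly cuts out a copy of $U_1$ on which the $\Br(M)$-action restricts to the given $W(C_\B)$-action depends on the fact that $e_{B_0}$ strictly lowers the $\delta$-weight on $\xi_X$ for $X\ne B_0$, so only the $\xi_{B_0}$ block of $V_1$ survives with full weight $\delta^{|B_0|}$. Everything else is a routine consequence of Corollary~\ref{thm:rep} and the definition of $h_{B_0,i}$.
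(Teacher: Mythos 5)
Your proof is correct, and it takes a noticeably different (and more careful) route than the paper's. The paper's own argument is a two-line sketch that only treats the case of a fixed orbit $\B$: it restricts $V_1=\sum_B\xi_BU_1$ to $W(C_\B)$ and asserts the restriction is $|\B|U_1$. That assertion is not literally true in general — already for $M=\A_3$ and $\B$ the orbit of a single root, the restriction of $V_1$ to $W(C_\B)\cong\Sym_2$ for $U_1$ trivial is $4\cdot\mathrm{triv}+2\cdot\mathrm{sgn}$, not $6\cdot\mathrm{triv}$ — and the paper does not explicitly separate representations attached to distinct orbits at all. Your argument repairs both points: for distinct orbits you use Proposition \ref{prop:rep} to show $e_{B_0}$ (for $B_0$ the larger highest element) kills one module and not the other, and within a fixed orbit you condense by the idempotent $\delta^{-|B_0|}e_{B_0}$, whose image Corollary \ref{thm:rep} identifies as exactly the $\xi_{B_0}$-block, and then recover $U_1$ as a $W(C_\B)$-module via $h_{B_0,i}=r_i$ for $i\in C_\B$ (as in Corollary \ref{cor:semi-direct}). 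Since any intertwiner commutes with both the idempotent and with $W(C_\B)\subseteq W$, this forces $U_1\cong U_1'$. What your approach buys is rigor: it replaces an incorrect statement about the full restriction with the standard idempotent-cutting argument, at the modest cost of invoking Proposition \ref{prop:rep}/Corollary \ref{thm:rep} and the identity $e_{B_0}^2=\delta^{|B_0|}e_{B_0}$, all of which are already available in the paper. One can quibble that in the first paragraph you should note that the case $B_0=\emptyset$ cannot arise there (your normalization $|B_0|\ge|B_0'|$ with $\B\ne\B'$ guarantees $|B_0|\ge1$), but this is cosmetic.
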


\begin{proof}  
Suppose that $U_1$ and are $ U_2$ are inequivalent irreducibles of $W(C_\B)$.
Restrict to $W(C_\B)$.  The corresponding representations are $|\B|U_1$ and
$|\B|U_2$ which are inequivalent.  This means the representations of $\Br(M)$
cannot be equivalent.
\end{proof}

\nl {\bf Proof of Theorem \ref{th:main}.}  The above shows that, for each
irreducible representation $\tau$ of $W(C_\B)$ there is an irreducible
representation $\rho_{\B}\otimes\tau$ of $\Br(M)\otimes_{\Z[\delta^{\pm1}]}
\Q(\delta)[W(C_\B)]$.  In particular, the algebra
$\Br(M)\otimes_{\Z[\delta^{\pm1}]} \Q(\delta)$ maps homomorphically onto a
direct sum of matrix algebras of dimensions $|\B|\tau(1)$ over $\Q(\delta)$
for $\B$ running over the admissible $W$-orbits in $\AO$ and $\tau$ over the
irreducible representations of $W(C_\B)$.  Therefore, $\dim(\Br(M))\ge
\sum_{\B,\tau} |\B|^2\tau(1)^2 = \sum_{\B} |\B|^2 |W(C_\B)|$.  In Proposition
\ref{prop:upb}, this number was proved to be an upper bound for
$\dim(\Br(M))$, so, in view of Lemma \ref{lm:counting}, the homomorphism onto
a direct sum of matrix algebras is an isomorphism and
$\Br(M)\otimes_{\Z[\delta^{\pm1}]} \Q(\delta)$ is split semisimple, so Theorem
\ref{th:main} is proved.

\np
With the notation of Proposition \ref{prop:monomials} and as an immediate
consequence of this proposition and the theorem, we have the following
two corollaries.

\begin{Cor}\label{cor:basis}
For $M\in\ADE$, the Brauer algebra $\Br(M)$ over $\Z[\delta^{\pm1}]$ has
a basis of the form $ue_Xzv$ for $X\in\AO_0$, $u,v^{-1}\in D_X$, 
and $z\in W(C_{WX})$.
\end{Cor}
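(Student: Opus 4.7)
The plan is to combine Proposition \ref{prop:monomials} with Theorem \ref{th:main} (and Lemma \ref{lm:counting}) in two steps: first establish that the proposed set spans, and then match cardinalities to conclude it is a basis of the free module.

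For the spanning claim, I would observe that Proposition \ref{prop:monomials} writes an arbitrary element of the monoid $\BrM(M)$ in the form $u e_X z v \delta^k$ with $X\in\AO_0$, $u,v^{-1}\in D_X$, $z\in W(C_{WX})$, and $k\in\Z$. Passing to the monoid algebra $\Br(M)=\Z[\BrM(M)]$ regarded as a $\Z[\delta^{\pm1}]$-algebra, the factor $\delta^k$ is absorbed into the scalar ring because $\delta$ is a unit in $\Z[\delta^{\pm1}]$. Hence the set
\[
\mathcal{S}=\{\,u e_X z v\mid X\in\AO_0,\ u,v^{-1}\in D_X,\ z\in W(C_{WX})\,\}
\]
spans $\Br(M)$ as a $\Z[\delta^{\pm1}]$-module.

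Next I would count $|\mathcal{S}|$. For fixed $X\in\AO_0$ the contribution is $|D_X|^2\cdot|W(C_{WX})|$, and since $D_X$ is a set of right coset representatives for $N_W(X)$ in $W$ we have $|D_X|=|WX|=|\B|$, where $\B=WX$. Summing over $\AO_0$, which parametrizes the $W$-orbits in $\AO$, gives
\[
|\mathcal{S}|=\sum_{\B}|\B|^2\,|W(C_\B)|.
\]
By Theorem \ref{th:main} and Lemma \ref{lm:counting}, this is exactly the rank of $\Br(M)$ as a free $\Z[\delta^{\pm1}]$-module.

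Finally, I would invoke the standard fact that in a free module of finite rank $n$ over a nonzero commutative ring, any spanning set of cardinality $n$ is a basis: the surjection $\Z[\delta^{\pm1}]^{|\mathcal{S}|}\twoheadrightarrow \Br(M)$ between free modules of equal finite rank must be an isomorphism (equivalently, one can tensor with $\Q(\delta)$ and use that a surjective linear map between vector spaces of the same finite dimension is injective, then lift back using freeness). This yields linear independence of $\mathcal{S}$ and completes the proof. The one point that requires care, rather than being a genuine obstacle, is keeping track of the matching of indexing: the count $\sum_{X\in\AO_0}|D_X|^2|W(C_{WX})|$ from Proposition \ref{prop:monomials} and the dimension formula from Lemma \ref{lm:counting} must agree term by term, which they do because $\AO_0$ is in bijection with the set of $W$-orbits $\B$ on $\AO$ and $|D_X|=|\B|$ for $X$ the highest element of $\B$.
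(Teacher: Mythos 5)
Your proof is correct and follows essentially the same route as the paper, which obtains the corollary as an immediate consequence of Proposition \ref{prop:monomials} (giving the spanning set, with $\delta^k$ absorbed into $\Z[\delta^{\pm1}]$) together with Theorem \ref{th:main} and Lemma \ref{lm:counting} (matching the cardinality $\sum_{\B}|\B|^2|W(C_\B)|$ to the rank of the free module). The concluding step you supply --- that a spanning family indexed by a set of cardinality equal to the rank of a free module over a commutative ring must be a basis --- is exactly the content the paper leaves implicit in the word ``immediate.''
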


\begin{Cor}\label{cor:ss}
For $M\in\ADE$, the Brauer algebra
$\Br(M)\otimes_{\Z[\delta^{\pm1}]}\Q(\delta)$ over $\Q(\delta)$ is a direct
sum of matrix algebras of size $|\B|\cdot\tau(1)$ for $(\B,\tau)$ running over
all pairs of a $W$-orbit $\B$ inside $\AO$ and an irreducible representation
$\tau$ of $W(C_\B)$.
The irreducibles are indexed by the irreducibles of $W(C_\B)$ 
over all $\B$.  
\end{Cor}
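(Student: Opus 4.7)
The plan is to deduce this corollary almost immediately from the proof of Theorem \ref{th:main}, packaged through Artin--Wedderburn. Semisimplicity of $\Br(M)\otimes_{\Z[\delta^{\pm1}]}\Q(\delta)$ is already part of Theorem \ref{th:main}, so any such finite-dimensional semisimple $\Q(\delta)$-algebra is, by the Wedderburn structure theorem, isomorphic to a direct sum of matrix algebras indexed by its isomorphism classes of irreducible modules, with each matrix block of size equal to the dimension of the corresponding irreducible.

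First I would exhibit a family of pairwise inequivalent irreducibles. For each $W$-orbit $\B\subseteq\AO$ and each irreducible $\Q(\delta)$-representation $\tau$ of $W(C_\B)$, Proposition \ref{prop:invariantsubspace} together with Proposition \ref{prop:irreducible} provides an irreducible $\Br(M)$-submodule of $V_\B\otimes_{\Z[\delta^{\pm1}]}\Q(\delta)$ of the form $\sum_{B\in\B}\xi_B U_\tau$, where $U_\tau$ realizes $\tau$; this representation has dimension $|\B|\cdot\tau(1)$. Proposition \ref{prop:different} guarantees the pairs $(\B,\tau)$ give mutually inequivalent irreducibles, because the restriction to $W(C_\B)$ detects both $\B$ (via multiplicity $|\B|$) and $\tau$.

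The key step is now a dimension count. Summing squares over all these irreducibles gives
\begin{equation*}
\sum_{\B,\tau}\bigl(|\B|\,\tau(1)\bigr)^2 \;=\; \sum_{\B}|\B|^2\sum_{\tau}\tau(1)^2 \;=\; \sum_{\B}|\B|^2\,|W(C_\B)|,
\end{equation*}
where the inner identity is the standard sum-of-squares formula for a finite group. By Lemma \ref{lm:counting}, this number equals the dimension in Table \ref{table:dims}, which is $\dim\Br(M)$ by Theorem \ref{th:main}.

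Finally, the block map $\Br(M)\otimes_{\Z[\delta^{\pm1}]}\Q(\delta)\longrightarrow \bigoplus_{\B,\tau}\End_{\Q(\delta)}(\rho_\B\otimes\tau)$ is surjective onto each factor by Jacobson density (each summand is a simple module over a semisimple algebra), hence surjective on the direct sum because the factors are pairwise non-isomorphic. Since source and target have equal $\Q(\delta)$-dimension by the count above, the map is an isomorphism, proving the first assertion. The second assertion, that the isomorphism classes of irreducibles are indexed by pairs $(\B,\tau)$, is then the enumeration of simple summands on the right-hand side. The only place where anything could go wrong is ensuring the dimension bookkeeping closes, but this is precisely the content of Lemma \ref{lm:counting} combined with Proposition \ref{prop:upb}, so no further obstacle arises.
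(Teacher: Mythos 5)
Your argument is correct and is essentially the paper's own: the corollary is derived there as an immediate consequence of the proof of Theorem \ref{th:main}, which likewise constructs the pairwise inequivalent irreducibles $\rho_\B\otimes\tau$ of dimension $|\B|\tau(1)$ via Propositions \ref{prop:irreducible} and \ref{prop:different}, uses $\sum_\tau\tau(1)^2=|W(C_\B)|$ to match the resulting lower bound against the upper bound of Proposition \ref{prop:upb} and Lemma \ref{lm:counting}, and concludes that the surjection onto the direct sum of matrix algebras is an isomorphism. Your explicit invocations of Artin--Wedderburn and Jacobson density merely make the same mechanism more verbose.
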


\begin{remark}\label{rmk:triple2}
\rm We finish by describing how to compute from a Brauer monomial $a\in
\BrM(M)$ the triple $(L,R,z)$ consisting of two elements $L$, $R$ of the same
$W$-orbit $\B = WX$ inside $\AO$, where $X\in\AO_0$, and of the element $z\in
W(C_\B)$ for which $a= ue_Xzv\delta^k$ as in Proposition \ref{prop:monomials}
with $L = uX$ and $Y = vX$. First compute $L = a\emptyset$ and $R =
a^{\op}\emptyset$, where $a^\op$ is the element of $\BrM(M)$ obtained by
reading backwards an expression of $a$ as a word in the generators (this
element is well defined as the operation ${\cdot}^\op$ is an anti-involution,
see \cite{CGW} or note that the set of relations shown in Tables \ref{table:BrauerRels} and
\ref{AddBrauerTable} is invariant under opposition). As a consequence of Proposition \ref{prop:monomials}, $L$ and
$R$ belong to the same $W$-orbit inside $\AO$. Let $X\in\AO_0$ be the highest
element of this orbit.  Pick $u,v^{-1} \in D_X$ such that $L = uX$ and $R =
v^{-1} X$.  Now compute $u^{-1}av^{-1}\xi_X$. The result will be an element of
the form $\xi_X z\delta^s$ for some $s\in\Z$ and $z\in W(C_{WX})$.  Then $a =
u e_X z v \delta^k$ with $k = s-|X|$, as required.  As discussed in Remark
\ref{rmk:triple1}, for $M=\A_{n-1}$, the sets $L$ and $R$ determine the
horizontal strands at the top and bottom, respectively, of the corresponding
Brauer diagram, whereas $z$ determines the permutation corresponding to the
vertical strands of the diagram. In view of Corollary \ref{cor:basis}, these
triples may be thought of as the abstract Brauer diagrams for any $M\in\ADE$.
For $M=\D_n$, there is a diagrammatic description of $\BrM(\D_n)$ in
\cite{CGWTangle}.
\end{remark}

\end{document}